\colorlet{genial}{black} 
\colorlet{genialsol}{black}
\newtheoremstyle{genialnumbox}
{7pt}
{7pt}
{\normalfont}
{}
{\small\bf\sffamily\color{genial}}
{\;}
{0.25em}
{%
{\small\sffamily\color{genial}\thmname{#1}}%
{\nobreakspace\thmnumber{\@ifnotempty{#1}{}\@upn{#2}}}
\thmnote{{\nobreakspace\the\thm@notefont\sffamily\bfseries\color{black}\nobreakspace(#3)}} 
}
\newtheoremstyle{blacknumex}
{7pt}
{7pt}
{\normalfont}
{} 
{\small\bf\sffamily}
{\;}
{0.25em}
{%
{\small\sffamily\color{genial}\thmname{#1}}%
{\nobreakspace\thmnumber{\@ifnotempty{#1}{}\@upn{#2}}}
\thmnote{{\nobreakspace\the\thm@notefont\sffamily\bfseries\color{black}\nobreakspace(#3)}} 
}
\newtheoremstyle{blacknumbox} 
{7pt}
{7pt}
{\normalfont}
{}
{\small\bf\sffamily}
{\;}
{0.25em}
{%
{\small\sffamily\color{genial}\thmname{#1}}%
{\nobreakspace\thmnumber{\@ifnotempty{#1}{}\@upn{#2}}}
\thmnote{{\nobreakspace\the\thm@notefont\sffamily\bfseries\color{black}\nobreakspace(#3)}} 
}
\newtheoremstyle{genialnum}
{7pt}
{7pt}
{\normalfont}
{}
{\small\bf\sffamily\color{genial}}
{\;}
{0.25em}
{%
{\small\sffamily\color{genial}\thmname{#1}}%
{\nobreakspace\thmnumber{\@ifnotempty{#1}{}\@upn{#2}}}
\thmnote{{\nobreakspace\the\thm@notefont\sffamily\bfseries\color{black}\nobreakspace(#3)}} 
}
\newmdenv[skipabove=7pt,
skipbelow=7pt,
rightline=false,
leftline=false,
topline=false,
bottomline=false,
backgroundcolor=black!5,
linecolor=genial,
innerleftmargin=5pt,
innerrightmargin=5pt,
innertopmargin=10pt,
leftmargin=0cm,
rightmargin=0cm,
innerbottommargin=10pt]{tBox}
\newmdenv[skipabove=7pt,
skipbelow=7pt,
rightline=false,
leftline=false,
topline=false,
bottomline=false,
backgroundcolor=genial!10,
linecolor=genial,
innerleftmargin=5pt,
innerrightmargin=5pt,
innertopmargin=5pt,
innerbottommargin=5pt,
leftmargin=0cm,
rightmargin=0cm,
linewidth=4pt]{eBox}	
\newmdenv[skipabove=7pt,
skipbelow=7pt,
rightline=false,
leftline=true,
topline=false,
bottomline=false,
linecolor=genial!50,
innerleftmargin=5pt,
innerrightmargin=5pt,
innertopmargin=5pt,
leftmargin=0cm,
rightmargin=0cm,
linewidth=4pt,
innerbottommargin=5pt]{dBox}	
\newmdenv[skipabove=7pt,
skipbelow=7pt,
rightline=false,
leftline=false,
topline=false,
bottomline=false,
linecolor=gray,
backgroundcolor=black!5,
innerleftmargin=5pt,
innerrightmargin=5pt,
innertopmargin=5pt,
leftmargin=0cm,
rightmargin=0cm,
linewidth=4pt,
innerbottommargin=5pt]{cBox}
\newmdenv[skipabove=7pt,
skipbelow=7pt,
rightline=false,
leftline=false,
topline=false,
bottomline=false,
linecolor=gray,
backgroundcolor=black!5,
innerleftmargin=5pt,
innerrightmargin=5pt,
innertopmargin=5pt,
leftmargin=0cm,
rightmargin=0cm,
linewidth=4pt,
innerbottommargin=5pt]{pBox}
\newmdenv[skipabove=7pt,
skipbelow=7pt,
rightline=false,
leftline=false,
topline=false,
bottomline=false,
linecolor=genialsol,
innerleftmargin=5pt,
innerrightmargin=5pt,
innertopmargin=0pt,
leftmargin=0cm,
rightmargin=0cm,
linewidth=4pt,
innerbottommargin=0pt]{solBox}	
\theoremstyle{genialnumbox}
\newtheorem{thm1}{Theorem}
\newtheorem{ithm1}[thm1]{$\star$ THEOREM}
\newtheorem{ques1}[thm1]{Question}
\newtheorem{conj1}[thm1]{Conjecture}
\theoremstyle{blacknumex}
\newtheorem{exer}[thm1]{Exercise}
\newtheorem{exer*}[thm1]{$\ast$ Exercise}
\theoremstyle{blacknumbox}
\newtheorem{dfn1}[thm1]{Definition}
\theoremstyle{genialnum}
\newtheorem{cor1}[thm1]{Corollary}
\newtheorem{prop1}[thm1]{Proposition}
\newtheorem{lem1}[thm1]{Lemma}
\newtheorem{exm1}[thm1]{Example}
\newenvironment{thm}{\paragraph{ } \begin{tBox}\begin{thm1}}{\end{thm1}\end{tBox}}
\newenvironment{exe*}{\paragraph{ } \begin{eBox}\begin{exer*}}{\hfill{\color{genial}
\ensuremath{\diamond\diamond\diamond}}\end{exer*}\end{eBox}}
\newenvironment{dfn}{\paragraph{ } \begin{dBox}\begin{dfn1}}{\end{dfn1}\end{dBox}}	
\newenvironment{cor}{\paragraph{ } \begin{cBox}\begin{cor1}}{\end{cor1}\end{cBox}}
\newenvironment{prop}{\paragraph{ } \begin{pBox}\begin{prop1}}{\end{prop1}\end{pBox}}	
\newenvironment{lem}{\paragraph{ } \begin{pBox}\begin{lem1}}{\end{lem1}\end{pBox}}
\newenvironment{lem*}[1]{\vspace{1ex}\noindent
{\bf Lemma* (#1).} [restatement]  \hspace{0.5em} \em }{ }
\newenvironment{thm*}[1]{\begin{cBox}
\vspace{1ex}\noindent 
{\bf Theorem* (#1).} [restatement]  \hspace{0.5em} }{\end{cBox}}
\theoremstyle{genialnum}
\newtheorem*{clm*}{Claim}
\newenvironment{sol}%
{\begin{solBox}
\par \noindent 
\scriptsize
{\bf Solution to ex:{\color{blue} \arabic{exer}}.}  {\color{red} \ \  :( } \\ }%
{\hfill {\color{blue} :) $\checkmark$} \end{solBox}}
\newcommand{\ENDEXER}{
{\expandafter\comment}
{\expandafter\endcomment}
}
\newtheorem{rem}[thm1]{Remark}
\renewcommand{\@seccntformat}[1]{\llap{\textcolor{genial}{\csname the#1\endcsname}\hspace{1em}}}                    
\renewcommand{\section}{\@startsection{section}{1}{\z@}
{-4ex \@plus -1ex \@minus -.4ex}
{1ex \@plus.2ex }
{\normalfont\large\sffamily\bfseries}}
\renewcommand{\subsection}{\@startsection {subsection}{2}{\z@}
{-3ex \@plus -0.1ex \@minus -.4ex}
{0.5ex \@plus.2ex }
{\normalfont\sffamily\bfseries}}
\renewcommand{\subsubsection}{\@startsection {subsubsection}{3}{\z@}
{-2ex \@plus -0.1ex \@minus -.2ex}
{.2ex \@plus.2ex }
{\normalfont\small\sffamily\bfseries}}                        
\renewcommand\paragraph{\@startsection{paragraph}{4}{\z@}
{-2ex \@plus-.2ex \@minus .2ex}
{.1ex}
{\normalfont\small\sffamily\bfseries}}
\newcommand{\IP}[1]{\left\langle #1 \right\rangle}
\newcommand{\iP}[1]{\langle #1 \rangle}
\newcommand{\set}[1]{\left\{#1\right\}}
\newcommand{\Integer}{\mathbb{Z}}
\newcommand{\Z}{\Integer}
\newcommand{\R}{\mathbb{R}}
\newcommand{\C}{\mathbb{C}}
\newcommand{\eps}{\varepsilon}
\newcommand{\ie}{{\em i.e.\ }}
\newcommand{\eg}{{\em e.g.\ }}
\newcommand{\1}[1]{\mathbf{1}_{\set{ #1 } }}
\def\squareforqed{\hbox{\rlap{$\sqcap$}$\sqcup$}}
\def\qed{\ifmmode\squareforqed\else{\unskip\nobreak\hfil
\penalty50\hskip1em\null\nobreak\hfil\squareforqed
\parfillskip=0pt\finalhyphendemerits=0\endgraf}\fi}
\newcommand{\ignore}[1]{ }
\newcommand{\dist}{\mathrm{dist}}
\newcommand{\vphi}{\varphi}
\newcommand{\Ee}{\mathcal{E}}
\newcommand{\T}{\mathbb{T}}
\newcommand{\define}[1]{\textbf{#1}}
\renewcommand{\H}{\mathcal{H}}
\newcommand{\supp}{\mathrm{supp}}
\newcommand\blfootnote[1]{%
  \begingroup
  \renewcommand\thefootnote{}\footnote{#1}%
  \addtocounter{footnote}{-1}%
  \endgroup
}
\begin{document}

\title[Inequalities for Dirichlet eigenvalues]{Universal inequalities for Dirichlet eigenvalues on discrete groups}

\author{Bobo Hua}
\address{Bobo Hua: School of Mathematical Sciences, LMNS,
Fudan University, Shanghai 200433, China; Shanghai Center for
Mathematical Sciences, Fudan University, Shanghai 200438,
China.}
\email{bobohua@fudan.edu.cn}

\author{Ariel Yadin}
\address{Ariel Yadin: Ben-Gurion University of the Negev. }
\email{yadina@bgu.ac.il}

\begin{abstract}
We prove universal inequalities for Laplacian eigenvalues 
with Dirichlet boundary condition on subsets of certain discrete groups.
The study of universal inequalities on Riemannian manifolds was initiated by Weyl, Polya, Yau, and others. Here we focus on a version by Cheng and Yang.

Specifically, we prove Yang-type universal inequalities for Cayley graphs of 
finitely generated amenable groups, as well as for the $d$-regular tree 
(simple random walk on the free group).
\end{abstract}

\maketitle %

\blfootnote{
\textbf{Acknowledgements.}
We thank the helpful discussions and suggestions on universal inequalities on graphs by Yong Lin.

B.H.\ is supported by NSFC, no.\ 11831004 and no.\ 11926313.
A.Y.\ is partially supported by the Israel Science Foundation (grant no.\ 1346/15).
}

\section{Introduction}

The spectral theory of Laplace-Beltrami operators on Riemannian manifolds was extensively studied in the literature, see e.g. \cite{CouHil53,Chavel84,SY94,Libook12}. For a bounded domain $\Omega$ in a Riemannian manifold, 
we denote by
$$0<\lambda_1<\lambda_2\leq\lambda_3\leq\cdots\ \uparrow \infty$$ the spectrum of the Laplace-Beltrami operator with Dirichlet boundary condition on $\Omega$, counting the multiplicity of eigenvalues.

For the Euclidean space, Weyl \cite{Weyl} proved the asymptotic behavior of eigenvalues that
$$\lambda_k\sim\frac{4\pi^2}{\left(\omega_n\mathrm{vol}(\Omega)\right)^{\frac{2}{n}}}k^{\frac{2}{n}},\quad k\rightarrow\infty,$$
where $\omega_n$ is the volume of the unit ball in $\R^n$ and $\mathrm{vol}(\Omega)$ is the volume of $\Omega$. It was conjectured by P\'{o}lya \cite{Pol61} that
$$\lambda_k\geq\frac{4\pi^2}{(\omega_n\mathrm{vol}(\Omega))^{\frac{2}{n}}}k^{\frac{2}{n}},\quad k=1,2,3,\cdots.$$
Li and Yau \cite{LY83} proved that
$$\lambda_k\geq\frac{n}{n+2}\frac{4\pi^2}{(\omega_n\mathrm{vol}(\Omega))^{\frac{2}{n}}}k^{\frac{2}{n}},\quad
k=1,2,3,\cdots.$$

Payne, Polya and Weinberger \cite{PPW} proved the gap estimate of consecutive eigenvalues for a bounded domain in $\R^2,$ generalized to $\R^n$ by Thompson \cite{Thomp69}, that for any $k\geq 1,$
$$\lambda_{k+1}-\lambda_k\leq\frac{4}{nk}\sum_{i=1}^k\lambda_i.$$ This was improved by Hile and Protter \cite{HP80}. A sharp inequality was proved by Yang \cite{Yang91,CY07} that
\begin{equation}\label{eq:Yangint1}\sum_{i=1}^k(\lambda_{k+1}-\lambda_i)^2\leq\frac{4}{n}\sum_{i=1}^k\lambda_i(\lambda_{k+1}-\lambda_i).\end{equation} 
As is well-known, see \eg \cite{Ash99}, Yang's inequality implies the Payne-Polya-Weinberger inequality etc. These are called universal inequalities for eigenvalues since they are independent of the domain $\Omega.$ See \cite{AshBen1991,AshBen92,AshBen1994,AshBen96,HarStu97,Ash99,Ash02,ChengYang05,AshBen2007} for more results regarding Euclidean spaces.

Universal inequalities have been generalized to eigenvalues of Laplace-Beltrami operators on Riemannian manifolds. In particular, Yang's inequality has been proved for space forms. For the unit $n$-sphere, Cheng and Yang \cite{ChengYang05} proved that
$$\sum_{i=1}^k(\lambda_{k+1}-\lambda_i)^2\leq\frac{4}{n}\sum_{i=1}^k(\lambda_{k+1}-\lambda_i)(\lambda_i+\frac{n^2}{4}).$$ For $\mathbb{H}^n,$ the $n$-dimensional hyperbolic space of sectional curvature $-1,$ Cheng and Yang \cite{ChengYang09} proved that
\begin{equation}\label{eq:hyp1}\sum_{i=1}^k(\lambda_{k+1}-\lambda_i)^2\leq 4\sum_{i=1}^k(\lambda_{k+1}-\lambda_i)(\lambda_i-\frac{(n-1)^2}{4}).
\end{equation} Note that $\frac{(n-1)^2}{4}$ is the bottom of the spectrum of $\mathbb{H}^n.$ For a general Riemannian manifold, Chen and Cheng \cite{ChenCheng08} proved a variant of Yang's inequality using related geometric quantities via isometric embedding into the Euclidean space. For universal inequalities on manifolds, we refer the readers to  \cite{Li80,YangYau80,Leung91,Har93,HarMich94,ChengYang06JPS,Har07,SCY08,ChengYang09,EHI09,CZL12,ChengPeng13,CZY16}.

In this paper, we study universal inequalities for eigenvalues on graphs, in particular Cayley graphs of discrete groups. We recall the setting of general networks. A \define{network} is a pair $(V,c)$ where 
$V$ is a countable set and $c: V \times V \to [0,\infty)$
is called the \define{conductance}.
The conductance must satisfy $0 \leq c(x,y) = c(y,x) < \infty$ (symmetric) and 
and $\pi(x) := \sum_y c(x,y) < \infty$ for every $x$.  
We write $x \sim y$ to indicate $c(x,y)>0$ (in which case we say that $x \sim y$ is an {\em edge} in the network).
A network naturally provides a {\em reversible Markov chain}, whose transition matrix is given by 
$P(x,y) = \frac{c(x,y)}{\pi(x)}$. The (normalized) \define{Laplacian} is the operator $\Delta = I - P,$ where $I$ denotes the identity operator, \ie $$\Delta f(x) = \sum_y P(x,y) (f(x)-f(y)).$$ 
We denote by $L^2(V,\pi)$ the Hilbert space of $L^2$ summable functions on $V,$ equipped with the inner product
$$\IP{ f ,g } = \IP{ f,g}_\pi : = \sum_x\pi(x) f(x) \overline{ g(x)  }.$$
It is well-known, the Laplacian $\Delta$ is a bounded self-adjoint operator on $L^2(V,\pi),$ whose spectrum is contained in $[0,2].$ We write $\lambda_{\min}$ for the {\em bottom} of the spectrum of $\Delta.$

The Laplacian with Dirichlet boundary condition on finite subsets of networks has been investigated in the literature, see e.g. \cite{Dod84,Frie93,CouGri98,ChungYau00,BHJ14}.
For finite $\Omega \subset V,$ the Laplacian with Dirichlet boundary conditions on $\Omega,$ denote by $\Delta_\Omega,$ is defined as
the Laplacian $\Delta$ restricted to the subspace $$L^2(\Omega) := \{ f \in L^2(V,\pi) \ : \ f\big|_{G \setminus \Omega} \equiv 0 \}.$$ The eigenvalues of $\Delta_\Omega,$ called  {\em Dirichlet eigenvalues} on $\Omega$, are ordered by
$$0<\lambda_1\leq\lambda_2\leq\cdots\leq\lambda_{|\Omega|},$$ where $|\cdot|$ denotes the cardinality of the subset. 
We are interested in proving universal inequalities on graphs, in particular Yang-type inequalities \eqref{eq:Yangint1} and \eqref{eq:hyp1}. Due to the discrete nature of graphs, some modification is required.

\begin{dfn}
We say that the network $(V,c)$ satisfies \define{Yang's inequality} (resp. the \define{Yang-type inequality})
with constant $C_Y$ (resp. $C_{YT}$) if the following holds
for any finite subset $\Omega \subset G$:

Let $0 <\lambda_1 \leq \lambda_2 \leq \cdots \leq \lambda_{|\Omega|}$ be the Dirichlet eigenvalues 
of $\Omega$.
Then, for any $k < |\Omega|$,
$$ \sum_{i=1}^{k} | \lambda_{k+1} - \lambda_i |^2 
\leq C_Y \cdot \sum_{i=1}^k (\lambda_{k+1} - \lambda_i)  (\lambda_i - \lambda_{\min} ) . $$
\begin{equation}\label{eq:Yangtype}(\mathrm{resp.}\quad \sum_{i=1}^{k} | \lambda_{k+1} - \lambda_i |^2 (1- \lambda_i ) 
\leq C_{YT} \cdot \sum_{i=1}^k (\lambda_{k+1} - \lambda_i)  (\lambda_i - \lambda_{\min} ) . )\end{equation}
\end{dfn}

Since $\lambda_i\leq 2,$ for any $i\geq 1,$ one easily sees that in case of $\lambda_{\min}=0,$ the Yang-type inequality implies Yang's inequality with $C_Y=C_{YT}+2.$
Following the arguments in \cite{Yang91,Ash99,CY07}, the first author {\em et al.\ } \cite{HLS17} proved that the integer lattice $\Z^n,$ a discrete analog of $\R^n,$ satisfies Yang-type inequality,
with constant $C_{YT}=\frac{4}{n}.$ Recently, Kobayashi \cite{Kobayashi20} proved the Yang-type inequality for the eigenvalues of the Laplacian (not Dirichlet eigenvalues) of a finite edge-transitive graph. 

Note that $\Z^n$ can be regarded as a Cayley graph of a free Abelian group. In this paper, we prove Yang-type inequalities for more general Cayley graphs of finitely generated infinite groups.

\subsection{Amenable groups}

Our first result regards amenable groups.
Let $G$ be a finitely generated amenable group.
Consider some probability measure $\mu$ on $G$ (which we think of as a non-negative function 
$\mu :G \to [0,1]$ such that $\sum_x \mu(x) = 1$).  
Assume that $\mu$ is symmetric, \ie $\mu(x) = \mu(x^{-1})$ for all $x \in G$.
Then $\mu$ induces a corresponding Cayley graph (or network) by setting the conductances 
$c(x,y) = \mu(x^{-1} y)$.  This network corresponds to the $\mu$-random walk on $G$.
This network is denoted by $(G,\mu)$.

\begin{thm} \label{thm:amenable}
Let $G$ be a finitely generated infinite amenable group.
Let $\mu$ be a symmetric probability measure on $G$, and consider the Cayley network $(G,\mu)$
of $G$ with respect to $\mu$.
Set $ \mu_* : = \inf_{1 \neq y \in \supp(\mu) } \mu(y)$.

Then, the network $(G,\mu)$ satisfies Yang's inequality,
with constant $C_Y = \frac{6}{ \mu_*}$. 
\end{thm}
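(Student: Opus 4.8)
The plan is to run the classical Payne--P\'olya--Weinberger/Yang commutator scheme (as in \cite{Yang91,CY07,HLS17}), with the Euclidean coordinate functions replaced by a family of functions on $G$ that are harmonic in a neighbourhood of $\Omega$ and whose discrete gradients are controlled, above and below, by absolute constants. Amenability is used twice: to build such a family (via a large F\o lner set), and to guarantee $\lambda_{\min}=0$ (Kesten's theorem), so that the right-hand side of Yang's inequality is $\sum_{i=1}^k(\lambda_{k+1}-\lambda_i)\lambda_i$. One may assume $\mu(1)=0$ at the outset, since the lazy part only rescales $\Delta_\Omega$; then $\pi\equiv 1$ on the Cayley network and $\sum_y c(x,y)=1$ for every $x$.

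The abstract ingredient is the following. Let $H$ be self-adjoint on a finite-dimensional inner product space with orthonormal eigenbasis $u_1,\dots,u_N$, $Hu_i=\lambda_i u_i$, and let $B$ be self-adjoint. Applying the min-max principle to the trial vectors $\phi_i=Bu_i-\sum_{j\le k}\IP{Bu_i,u_j}u_j$, expanding $[H,B]$ and $[B,[H,B]]$ in the eigenbasis and summing over $i\le k$, one obtains
$$\sum_{i=1}^k(\lambda_{k+1}-\lambda_i)^2\IP{[B,[H,B]]u_i,u_i}\ \le\ 2\sum_{i=1}^k(\lambda_{k+1}-\lambda_i)\norm{[H,B]u_i}^2.$$
I apply this with $H=\Delta_\Omega$ and $B=M_g$, multiplication by a real function $g$; since $M_g$ commutes with the orthogonal projection onto $L^2(\Omega)$, all commutator quantities below agree with the ambient ones. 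A direct computation gives $[\Delta,M_g]u(x)=\sum_y P(x,y)(g(x)-g(y))u(y)$ and
$$\IP{[M_g,[\Delta,M_g]]u,u}=\sum_x u(x)^2 w_g(x)-\tfrac12\sum_{x,y}c(x,y)\big(g(x)-g(y)\big)^2\big(u(x)-u(y)\big)^2,\qquad w_g(x):=\sum_y c(x,y)\big(g(x)-g(y)\big)^2.$$
If $g$ is harmonic on the $1$-neighbourhood $\Omega^+$ of $\Omega$, then $\sum_y P(x,y)(g(x)-g(y))=0$ there, so for $x\in\Omega^+$ one may subtract $u(x)$ inside the sum to get $[\Delta,M_g]u(x)=\sum_y P(x,y)(g(x)-g(y))(u(y)-u(x))$ (this vanishes off $\Omega^+$), and Cauchy--Schwarz yields $\norm{[\Delta,M_g]u}^2\le\sum_x w_g(x)\sum_y c(x,y)(u(x)-u(y))^2$.

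The crux is to construct, for each finite $\Omega$, a family $g_1,\dots,g_m$ harmonic on $\Omega^+$ with
$$\mu_*\ \le\ W(x,y):=\sum_{\alpha=1}^m\big(g_\alpha(x)-g_\alpha(y)\big)^2\ \le\ 1\qquad\text{for every edge }x\sim y\text{ meeting }\Omega^+,$$
the bounds $\mu_*,1$ being independent of $\Omega$. Here amenability enters: given $\Omega$, choose a F\o lner set $F$ that is very thick relative to $\Omega$, and let the $g_\alpha$ run over the harmonic extensions to $\Omega^+$ of the indicators $\mathbf 1_{wF}$, $w\in G$, renormalised so that $W\le 1$. All but $O(|\partial F|)$ of these translates are locally constant on $\Omega^+$; on the remaining, "straddling", ones harmonicity together with the edge weight $c(x,xs)=\mu(s)\ge\mu_*$ should force a definite amount of variation in each generator direction, which survives the renormalisation exactly because $|\partial F|/|F|\to 0$ (amenability). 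If the family can only be made harmonic up to $\sup_{x\in\Omega}\sum_\alpha(\Delta g_\alpha(x))^2\le\eta$, that error adds at most $2\eta$ to $\sum_\alpha\norm{[\Delta,M_{g_\alpha}]u_i}^2$, a term one removes at the end by letting $F\to G$ along the F\o lner sequence ($\eta\to 0$) with $\Omega$ and $k$ fixed. This is the step I expect to be the main obstacle: in effect one needs a harmonic, almost-isometric embedding of a neighbourhood of $\Omega$ into Euclidean space with edge-length distortion controlled by $\mu_*$ alone, and it is here that amenability and the value $6/\mu_*$ of the constant are really used.

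Granting the family, sum the two displayed identities over $\alpha$: using $\mu_*\le W\le 1$, $\sum_y c(x,y)=1$ and $\norm{u_i}=1$ one gets, for each $i\le k$,
$$\sum_\alpha\IP{[M_{g_\alpha},[\Delta,M_{g_\alpha}]]u_i,u_i}\ \ge\ \mu_*-\lambda_i,\qquad\sum_\alpha\norm{[\Delta,M_{g_\alpha}]u_i}^2\ \le\ 2\lambda_i,$$
so the abstract inequality, summed over $\alpha$, becomes $\sum_{i=1}^k(\lambda_{k+1}-\lambda_i)^2(\mu_*-\lambda_i)\le 4\sum_{i=1}^k(\lambda_{k+1}-\lambda_i)\lambda_i$. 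Transposing the term $\sum_i(\lambda_{k+1}-\lambda_i)^2\lambda_i$ to the right and bounding it by $2\sum_i(\lambda_{k+1}-\lambda_i)\lambda_i$ (since $\lambda_{k+1}-\lambda_i\le\lambda_{k+1}\le 2$) gives $\mu_*\sum_{i=1}^k(\lambda_{k+1}-\lambda_i)^2\le 6\sum_{i=1}^k(\lambda_{k+1}-\lambda_i)\lambda_i$. Finally, an infinite amenable group has $\lambda_{\min}=0$ (Kesten), so $\lambda_i=\lambda_i-\lambda_{\min}$, and this is exactly Yang's inequality with $C_Y=6/\mu_*$.
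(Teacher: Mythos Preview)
Your overall scheme matches the paper's: both run the Yang commutator argument (the paper's Lemma~\ref{lem:main bound} and its Hilbert-valued upgrade, Theorem~\ref{thm:main bound Hilbert}, are exactly your ``abstract ingredient'' with $B=M_g$), and both need test functions that are harmonic with edge-differences controlled in terms of $\mu_*$. Your endgame arithmetic, granted such a family, is correct and lands on the same constant $6/\mu_*$; the appeal to Kesten for $\lambda_{\min}=0$ is also the same.

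The genuine gap is exactly where you flag it: the construction of the $g_\alpha$. The F\o lner--indicator sketch does not, as written, produce the two-sided bound $\mu_*\le W(x,y)\le 1$. In particular the lower bound $W\ge\mu_*$ (``a definite amount of variation in each generator direction'') has no argument behind it, and making the ``harmonic extension to $\Omega^+$'' precise while keeping a lower bound on $W$ that is uniform in $\Omega$ and in the F\o lner stage is not at all clear. The paper bypasses this construction entirely by invoking a \emph{harmonic cocycle}: since an infinite amenable group fails Kazhdan's property~(T), results of Mok and Korevaar--Schoen (with a short proof in Ozawa) provide a Hilbert space $\H$ carrying a unitary $G$-action and a map $\alpha:G\to\H$ satisfying $\alpha(xy)=\alpha(x)+x.\alpha(y)$ and $\Delta\alpha\equiv 0$ everywhere. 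Unitarity plus the cocycle identity give $\|\alpha(x)-\alpha(xy)\|_\H^2=\|\alpha(y)\|_\H^2$, so $\Gamma(\alpha)$ is a nonzero constant and $\|\alpha(y)\|_\H^2\le \tfrac{2}{\mu_*}\Gamma(\alpha)$ for every $y\in\supp(\mu)$. These two facts are precisely the replacement for your two-sided control of $W$; feeding them into the Hilbert-valued lemma (the coordinates of $\alpha$ in an orthonormal basis of $\H$ play the role of your $g_\alpha$) yields $C_Y=6/\mu_*$ directly, with no limiting argument in $\Omega$ or $F$.

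In short: your reduction and final computation are fine, but the missing idea is to use a harmonic cocycle rather than an ad hoc F\o lner construction; that is how the paper supplies the test function.
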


For finitely generated groups with Abelian quotients, \ie those groups which admit homomorphisms onto $\Z^n$ for some $n,$ we prove the Yang-type inequality with $C_{YT}=\frac{4}{n}$ for specific $\mu$-random walks, see Theorem~\ref{thm:Zd}. This extends the result for $\Z^n$ from \cite{HLS17}.

\subsection{Free groups}

Next, we consider Yang-type inequalities on regular trees, which can be regarded as Cayley graphs of free groups. Let $\T_d,$ $d\geq3,$ be a $d$-regular tree with the conductances of the edges $c(x,y) = \1{x \sim y} \tfrac1d$, which is a discrete analog of hyperbolic space $\mathbb{H}^d$. The Laplacian corresponds to the generator of the {\em simple random walk} on $\T_d$. 
As is well-known, the bottom of the spectrum of $\T_d$ is $1-\frac{2\sqrt{d-1}}{d}$. 
Following the arguments in \cite{ChengYang09}, we prove the following result.

\begin{thm} \label{thm:trees}
The network given by the simple random walk on the $d$-regular tree $\T_d$ (where $d>2$)
satisfies the Yang-type inequality with constant $C_{YT}=\frac{8\sqrt{d-1}}{d}$.
\end{thm}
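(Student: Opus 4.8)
The plan is to follow the abstract commutator/Rayleigh-quotient machinery that underlies all Yang-type universal inequalities, adapted to the tree using the isometric-embedding idea of Cheng--Yang \cite{ChengYang09}. Let $\Omega \subset \T_d$ be finite, let $\{u_i\}_{i=1}^{|\Omega|}$ be an $L^2(\pi)$-orthonormal basis of eigenfunctions of $\Delta_\Omega$ with eigenvalues $\lambda_i$, and fix $k<|\Omega|$. For a bounded function $\varphi : \T_d \to \R$ (to be chosen later, playing the role of the coordinate functions) set $\psi_i := \varphi u_i - \sum_{j=1}^k \langle \varphi u_i, u_j\rangle u_j$, so that $\psi_i \perp u_1,\dots,u_k$. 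The general Cheng--Yang recursion gives, after summing over $i\le k$ and over a family of such $\varphi$'s, an inequality of the shape
$$ \sum_{i=1}^k (\lambda_{k+1}-\lambda_i)^2 \langle [\Delta,\varphi] u_i, \varphi u_i\rangle \;\le\; \sum_{i=1}^k (\lambda_{k+1}-\lambda_i)\,\big\| [\Delta,\varphi] u_i \big\|^2 ,$$
where $[\Delta,\varphi] = \Delta\varphi - \varphi\Delta$ is the commutator (multiplication by $\varphi$). Everything then reduces to two computations: a lower bound on $\sum_\varphi \langle [\Delta,\varphi]u_i,\varphi u_i\rangle$ in terms of $\lambda_i - \lambda_{\min}$, and an upper bound on $\sum_\varphi \| [\Delta,\varphi]u_i\|^2$ in terms of $(1-\lambda_i)$ times something controlled.

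The key step is choosing $\varphi$. On $\mathbb{H}^d$ Cheng--Yang use coordinate functions coming from the hyperboloid model; the discrete analogue should be built from the horocyclic/radial structure of $\T_d$. Concretely, fix an end $\xi$ of $\T_d$ and let $b=b_\xi$ be the associated Busemann (horocycle) function, normalized to integer values along geodesics toward $\xi$. A direct computation shows $P$ acts on a function of $b$ alone by a fixed three-term recurrence: if $f(x)=g(b(x))$ then $Pf(x) = \tfrac1d g(b(x)+1) + \tfrac{d-1}{d} g(b(x)-1)$, so the natural "plane-wave" candidates are $\varphi_\theta(x) = \rho^{\,b(x)} e^{i\theta b(x)}$ for a suitable radius $\rho$; averaging $|[\Delta,\varphi_\theta]u_i|^2$ and $\langle[\Delta,\varphi_\theta]u_i,\varphi_\theta u_i\rangle$ over $\theta$ (and over finitely many choices of end, or over the translates needed to symmetrize) is what produces the bottom-of-spectrum shift $\lambda_{\min}=1-\tfrac{2\sqrt{d-1}}{d}$ and the constant $\tfrac{8\sqrt{d-1}}{d}$. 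The factor $\tfrac{2\sqrt{d-1}}{d}$ should enter precisely because $\rho=\tfrac{1}{\sqrt{d-1}}$ is the critical radius at which $\rho + (d-1)\rho^{-1} \cdot \tfrac1d$-type expressions are extremized — the same place the spectral radius of $\T_d$ comes from.

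I would organize the write-up as: (i) record the abstract lemma yielding the displayed inequality for any bounded $\varphi$ (this is standard, essentially as in \cite{Yang91,Ash99,CY07,HLS17}, and uses only self-adjointness of $\Delta_\Omega$, the variational characterization of $\lambda_{k+1}$, and Cauchy--Schwarz); (ii) compute $[\Delta,\varphi]$ explicitly for $\varphi=\varphi_\theta$ as above, getting a local expression supported on edges; (iii) integrate over $\theta\in[0,2\pi)$ to kill cross terms and express $\int \langle [\Delta,\varphi_\theta]u_i,\varphi_\theta u_i\rangle\,d\theta$ and $\int\|[\Delta,\varphi_\theta]u_i\|^2\,d\theta$ as Dirichlet-type forms of $u_i$, hence in terms of $\langle \Delta_\Omega u_i, u_i\rangle = \lambda_i$ and of $\langle \Delta_\Omega^2 u_i,u_i\rangle=\lambda_i^2$; (iv) combine, using $\|u_i\|=1$, to match the two sides of \eqref{eq:Yangtype}, reading off $C_{YT}=\tfrac{8\sqrt{d-1}}{d}$. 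The main obstacle is step (ii)--(iii): unlike $\Z^n$, where the coordinate functions are group homomorphisms and the commutator is translation-invariant, on $\T_d$ the function $\varphi_\theta$ is only "harmonic up to an eigenvalue-type factor," so $[\Delta,\varphi_\theta]$ has both a first-order (difference) part and a zeroth-order part; getting the cross terms to cancel after the $\theta$-average, and checking that the leftover zeroth-order contribution is exactly the one that converts $\lambda_i$ into $\lambda_i-\lambda_{\min}$ on the right and inserts the $(1-\lambda_i)$ weight on the left, is the delicate bookkeeping. A secondary point to be careful about is that the $\varphi_\theta$ are complex-valued and unbounded in modulus on all of $\T_d$; since $\Omega$ is finite this is harmless, but one must either truncate $\varphi_\theta$ outside a large ball containing $\Omega$ or simply note that only values of $\varphi_\theta$ on $\Omega$ and its edge-boundary enter, so boundedness on that finite set suffices.
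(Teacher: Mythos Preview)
Your abstract framework --- the commutator/Yang-type lemma you state in step (i) --- is indeed the paper's Lemma~\ref{lem:main bound}, written in commutator language: $[\Delta,\varphi]u_i$ is exactly what the paper calls $\alpha_i = u_i\Delta\alpha - 2\Gamma(\alpha,u_i)$, and $\langle[\Delta,\varphi]u_i,\varphi u_i\rangle$ is the paper's $z_i = \langle\Gamma(\alpha),u_i^2\rangle - \Lambda(\alpha,u_i)$. You also correctly identify the Busemann function as the essential geometric input.

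Where you diverge from the paper is in what you do with the Busemann function. You propose complex plane-waves $\varphi_\theta = \rho^{b}e^{i\theta b}$ with $\rho=1/\sqrt{d-1}$ and then a $\theta$-average. The paper is much more direct: it takes $\alpha=b$ itself --- a single real-valued test function --- and plugs it into the lemma. This works cleanly because on $\T_d$ the Busemann function has three exact properties: $|b(x)-b(y)|=1$ on every edge (so $\Gamma(b)\equiv\tfrac12$ and $\Lambda(b,u_i)=\tfrac12\lambda_i$, immediately producing the $(1-\lambda_i)$ weight on the left); $\Delta b\equiv -(d-2)/d$ is constant; and the ``parent'' map $x\mapsto \vec x$ is exactly $(d-1)$-to-$1$, which lets one compute $\|2\Gamma(b,u_i)\|^2$ in closed form as $\gamma^2+(1-\lambda_{\min})^2-(1-\lambda_i)^2$. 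Combining these gives the right-hand side as $(\lambda_i-\lambda_{\min})(2-\lambda_i-\lambda_{\min})\le \tfrac{4\sqrt{d-1}}{d}(\lambda_i-\lambda_{\min})$, and the constant $8\sqrt{d-1}/d$ drops out. No averaging, no family of $\varphi$'s, no cross-term cancellation.

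Your proposed route has a concrete risk: with $\rho=1/\sqrt{d-1}$ the modulus $|\varphi_\theta(x)|^2=(d-1)^{-b(x)}$ is \emph{not} removed by the $\theta$-average, so both $\langle[\Delta,\varphi_\theta]u_i,\varphi_\theta u_i\rangle$ and $\|[\Delta,\varphi_\theta]u_i\|^2$ carry an $x$-dependent weight $(d-1)^{-b(x)}$ that does not obviously cancel between the two sides. (If instead you set $\rho=1$ so that $|\varphi_\theta|\equiv 1$, then $\Gamma(\varphi_\theta)$ and $\Lambda(\varphi_\theta,u_i)$ become constant multiples of what you get from $\alpha=b$, and after unwinding you are essentially back to the paper's computation.) So the ``delicate bookkeeping'' you flag is not merely tedious --- with your stated choice of $\rho$ it may not close at all. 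The paper's lesson is that on $\T_d$ the linear Busemann function already plays the role the hyperboloid coordinates play for $\mathbb H^n$; no exponential dressing is needed.
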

We sketch the proof strategies of Theorem~\ref{thm:amenable} and Theorem~\ref{thm:trees}: 
By the variational principle, for an upper bound estimate of eigenvalues, 
it suffices to construct appropriate test functions. 
Following the arguments in \cite{Yang91,ChengYang06JPS}, for any network and any test function $\alpha:V\to\R,$ we prove the Dirichlet eigenvalues satisfy some crucial estimate involving $\alpha,$ see Lemma~\ref{lem:main bound}, a discrete analog of \cite[Proposition~1]{ChengYang06JPS}. This enables us to derive the Yang-type inequality with choice of $\alpha$ with nice properties for $\Delta \alpha$ and the gradient of $\alpha.$ For $\R^n$ or $\Z^n,$ as in \cite{Yang91,CY07,HLS17}, linear functions are good candidates for test functions. 

In order to generalize the result to Cayley graphs of amenable groups, \ie Theorem~\ref{thm:amenable}, we use {\em harmonic cocycles} as test functions. The existence of harmonic cocycles for amenable groups was proved by \cite{mok1995harmonic, korevaari1997global}. 

For $\mathbb{H}^n,$ Cheng and Yang \cite{ChengYang09} used Busemann functions of geodesic rays to prove Yang-type inequality \eqref{eq:hyp1}. To extend the result to $\T_d,$ \ie Theorem~\ref{thm:trees}, we use the discrete analogs of Busemann functions as test functions.

The paper is organized as follows:
In next section, we introduce some basic facts on networks.
In Section~\ref{scn:main bound}, we prove the useful estimate of eigenvalues for general networks, Lemma~\ref{lem:main bound}.
Section~\ref{sec:app} is devoted to the proofs of main results, Theorem~\ref{thm:amenable} and Theorem~\ref{thm:trees}. In the last section, we derive some applications of the Yang-type inequality, such as the Paley-Polya-Weinberger inequality and the Hile-Protter inequality, etc.

\section{Notation and basic operators}

\subsection{$\Gamma$ calculus}

Let $(V,c)$ be a network on the set of vertices $V$ with the conductance $c$.
We allow $c(x,x)>0$,  which corresponds to a self-edge at $x \in V$.

Recall the inner product on functions defined in the introduction
$$ \IP{ f,g } = \sum_x \pi(x) f(x) \overline { g(x) } . $$
Accordingly we write
$|| f ||^2 = || f ||_\pi^2 : = \IP{ f,f}$, 
and the space of $L^2$ summable functions is given by 
$L^2(V,\pi) : = \{ f :V \to \C \ : \ || f || < \infty \}$.

The \define{Dirichlet energy} is defined to be 
$$ \Ee(f,g) := 
\sum_{x ,y } c(x,y) (f(x) - f(y)) \overline{ (g(x) - g(y)) } , $$
and $\Ee(f) : = \Ee(f,f)$.
If $f ,g \in L^2(V,\pi),$ then it is not difficult to prove the  ``integration by parts'' formula,
$$ \Ee(f,g) = 2 \IP{ \Delta f , g } = 2 \IP{ f , \Delta g } . $$

Define the so called {\em carr\'e du champ} operator (at $x \in V)$ as follows:
$$ 2 \Gamma(f,g)(x) := \big(  f \Delta \bar g + \bar g \Delta f - \Delta (f \bar g) \big) (x) 
\sum_y P(x,y) (f(x) - f(y)) \overline{ (g(x) - g(y)) } ,$$
and $\Gamma(f) := \Gamma(f,f)$.
Note that $\Gamma$ is symmetric and bi-linear.

Finally we define the scalar-valued (non-linear) functional:
$$ \Lambda(f,g) = \tfrac14 \sum_{x,y} c(x,y) |f(x)-f(y)|^2 \cdot |g(x) - g(y)|^2 
. $$

\subsection{Identities}

In this section we summarize a few identities which we will require in the analysis below.
All are straightforward and easy to prove, and hold for all $f,g \in L^2(V,\pi)$.

\begin{align}
\Ee(f,g) & = 2 \sum_x \pi(x) \Gamma(f,g)(x) = 2 \IP{ \Gamma(f,g) , 1 } .
\end{align}

Also, note that
\begin{align*}
\IP{ \Gamma(f,g) , g }  & = \tfrac12 \sum_{x,y} P(x,y) (f(x) - f(y)) \overline{ (g(x)-g(y)) g(x) } \pi(x) 
\end{align*}
Since $\pi(x) P(x,y) = c(x,y) = \pi(y) P(y,x)$,
\begin{align*}
\Ee( f, g^2) & 
= \sum_{x,y} c(x,y) (f(x)-f(y)) \overline{ (g(x)^2 - g(y)^2) } \\
& = \sum_{x,y} P(x,y) (f(x) - f(y)) \overline{ (g(x)-g(y)) g(x) } \pi(x) 
\\
& \qquad \qquad  + \sum_{x,y} P(x,y) (f(x)-f(y)) \overline{ (g(x)-g(y)) g(y) } \pi(x) \\
& = 4 \IP{ \Gamma(f,g) , g } .
\end{align*}
So in conclusion
\begin{align}
\IP{ 2 \Gamma(f,g) , g } & = \IP{ \Delta f , g^2 } . 
\end{align}

We also may compute, 
\begin{align*}
\IP{ 2\Gamma(f,g) , f \cdot g } & = \sum_{x,y} c(x,y) ( f(x) - f(y)) (g(x) - g(y)) f(x) g(x) \\
& = \sum_{x,y} c(x,y) ( f(x) - f(y)) (g(x) - g(y)) \cdot \tfrac{f(x) g(x) + f(y) g(y) }{2} \\
& = \sum_{x,y} c(x,y) ( f(x) - f(y)) (g(x) - g(y)) \cdot 
\tfrac{(f(x) + f(y)) (g(x) + g(y)) + (f(x)-f(y)) (g(x) - g(y))}{4} \\
& = \tfrac14 \sum_{x,y} c(x,y) ( f(x)^2 - f(y)^2) (g(x)^2 - g(y)^2)
\\
& \qquad \qquad + \tfrac14 \sum_{x,y} c(x,y) | f(x) - f(y) |^2 \cdot |g(x) - g(y) |^2 ,
\end{align*}
which culminates in
\begin{align} \label{eqn:Gamma and Lambda}
\IP{ 2\Gamma(f,g) , f \cdot g } & = \tfrac14 \Ee(f^2 , g^2) + \Lambda(f,g) .
\end{align}

\section{Universal inequality}
\label{scn:main bound}

The following is an analogue of \cite[Proposition~1]{ChengYang06JPS}. It is the main estimate which will imply our results.

Let $(V,c)$ be a network. 
Let $\Omega \subset V$ be a finite subset of size $n = |\Omega|$.
Let $u_1, \ldots,u_n$ be an orthonormal basis of eigenvectors for $\Delta_\Omega$ defined on 
the subspace $L^2(\Omega)$ 
of $L^2(V,\pi);$ 
that is,
\begin{itemize}
\item $\lambda_{\min} \leq \lambda_1 \leq \lambda_2 \leq \cdots \leq \lambda_n$, 
\item $\Delta u_i = \lambda_i u_i$,
\item $u_i \big|_{G \setminus \Omega} \equiv 0$,
\item $\IP{ u_i , u_j} = \1{i=j}$.
\end{itemize}
Since the Laplacian is self-adjoint, such an orthonormal basis exists, 
$\lambda_i \in \R$ and $u_i$ are real valued.

We call such a collection $(\lambda_i , u_i)_{i=1}^n$ the \define{Dirichlet system} for $\Omega$.

\begin{lem}
\label{lem:main bound}
Let $(V,c)$ be a network. Let $\Omega \subset V$ be a finite subset of size $n = |\Omega|$.
Let $(\lambda_i, u_i)_{i=1}^n$ be the Dirichlet system for $\Omega$.

Then,
for any $k<n$ and any $\alpha:V \to \R$ we have  
$$ \sum_{i=1}^k | \lambda_{k+1} - \lambda_i |^2 \Big( \IP{ \Gamma(\alpha) , u_i^2 } 
- \Lambda(\alpha,u_i) \Big)
\leq \sum_{i=1}^k (\lambda_{k+1}-\lambda_i)  || u_i \cdot \Delta \alpha - 2 \Gamma(\alpha,u_i) ||^2 . $$
\end{lem}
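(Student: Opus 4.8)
The plan is to follow the classical Yang-type commutator argument of Cheng–Yang, adapted to the discrete setting using the $\Gamma$-calculus identities from Section~2. The central object is the ``trial function'' $\phi_i := \alpha u_i - \sum_{j=1}^k a_{ij} u_j$, where $\alpha : V \to \R$ is the given function and the coefficients $a_{ij} := \IP{\alpha u_i, u_j}$ are chosen so that $\phi_i \perp u_1, \ldots, u_k$ in $L^2(V,\pi)$. Note $\phi_i \in L^2(\Omega)$ since $\alpha u_i$ is supported on $\Omega$ (even though $\alpha$ itself need not be). By the variational (min-max) characterization of $\lambda_{k+1}$,
$$ \lambda_{k+1} \IP{\phi_i, \phi_i} \leq \IP{\Delta \phi_i, \phi_i}, $$
which is the one place the specific value $\lambda_{k+1}$ enters.

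Next I would expand both sides. For the left side, $\IP{\phi_i,\phi_i} = \IP{\alpha u_i, \alpha u_i} - \sum_j a_{ij}^2$. For the right side, compute $\Delta(\alpha u_i)$ via the product-type identity; since $2\Gamma(\alpha, u_i) = \alpha \Delta u_i + u_i \Delta \alpha - \Delta(\alpha u_i)$, we get $\Delta(\alpha u_i) = \lambda_i \alpha u_i + u_i \Delta\alpha - 2\Gamma(\alpha,u_i)$. Writing $b_{ij} := \IP{u_i \Delta\alpha - 2\Gamma(\alpha, u_i), u_j}$, one checks by self-adjointness and $\Delta u_j = \lambda_j u_j$ that $b_{ij} = (\lambda_i - \lambda_j) a_{ij}$, and also that $\IP{\Delta\phi_i,\phi_i}$ reduces to $\lambda_i \IP{\alpha u_i,\alpha u_i} + \IP{u_i\Delta\alpha - 2\Gamma(\alpha,u_i), \alpha u_i} - \sum_j \lambda_j a_{ij}^2 - \text{(cross terms that vanish)}$. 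Rearranging the variational inequality then yields, for each $i \le k$,
$$ (\lambda_{k+1} - \lambda_i)\Big(\IP{\alpha u_i, \alpha u_i} - \textstyle\sum_j a_{ij}^2\Big) \leq \IP{u_i\Delta\alpha - 2\Gamma(\alpha,u_i), \alpha u_i} - \textstyle\sum_j (\lambda_j - \lambda_i) a_{ij}^2 . $$
Here the term $\IP{u_i\Delta\alpha - 2\Gamma(\alpha, u_i), \alpha u_i}$ should be rewritten, via identity~\eqref{eqn:Gamma and Lambda} and $\IP{2\Gamma(\alpha,u_i), u_i} = \IP{\Delta\alpha, u_i^2}$ (the boxed identities of Section~2), in terms of $\IP{\Gamma(\alpha), u_i^2}$ and $\Lambda(\alpha, u_i)$ — this is exactly the combination that appears on the left-hand side of the Lemma. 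Similarly $\IP{\alpha u_i, \alpha u_i} - \sum_j a_{ij}^2 = \|\phi_i\|^2$ and a short computation relates the ``leftover'' energy to $\IP{\Gamma(\alpha), u_i^2}$ up to the $\Lambda$ correction.

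The final step is the weighting trick: multiply the $i$-th inequality by $(\lambda_{k+1}-\lambda_i)$ and sum over $i = 1,\ldots,k$. The point of the factor is that the double sum $\sum_{i,j} (\lambda_{k+1}-\lambda_i)(\lambda_j-\lambda_i) a_{ij}^2$ becomes antisymmetric-friendly: pairing $(i,j)$ with $(j,i)$ and using $b_{ij} = -b_{ji}$, $a_{ij}^2 = a_{ji}^2$, together with the Cauchy–Schwarz bound $(\lambda_i - \lambda_j)^2 a_{ij}^2 = b_{ij}^2 \leq \|u_i\Delta\alpha - 2\Gamma(\alpha,u_i)\|^2 \cdot \text{(something)}$ — in fact one uses that $\sum_j b_{ij}^2 \le \|u_i\Delta\alpha - 2\Gamma(\alpha,u_i)\|^2$ since the $u_j$ are orthonormal — one bounds the cross terms and collapses everything into $\sum_i (\lambda_{k+1}-\lambda_i)\|u_i\Delta\alpha - 2\Gamma(\alpha,u_i)\|^2$ on the right and $\sum_i (\lambda_{k+1}-\lambda_i)^2(\IP{\Gamma(\alpha),u_i^2} - \Lambda(\alpha,u_i))$ on the left. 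I expect the main obstacle to be the bookkeeping in this last step: correctly tracking which cross terms cancel by antisymmetry versus which must be absorbed by Cauchy–Schwarz, and verifying that the discrete $\Lambda$-corrections (absent in the Riemannian case) land with the right sign so that they strengthen rather than weaken the inequality. Everything else is a mechanical application of the Section~2 identities.
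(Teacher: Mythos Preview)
Your setup is exactly the paper's: the same trial functions $\phi_i = \alpha u_i - \sum_j a_{ij} u_j$, the product rule giving $\alpha_i := u_i\Delta\alpha - 2\Gamma(\alpha,u_i)$, the identity $b_{ij} = (\lambda_j-\lambda_i)a_{ij}$, and the key Section~2 computation $z_i := \IP{\alpha_i,\alpha u_i} = \IP{\Gamma(\alpha),u_i^2} - \Lambda(\alpha,u_i)$. The variational step also matches: with $w_i := \IP{\alpha_i,\phi_i} = z_i + \sum_j(\lambda_i-\lambda_j)a_{ij}^2$, one has $(\lambda_{k+1}-\lambda_i)\|\phi_i\|^2 \le w_i$.

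The gap is in your ``final step.'' If you multiply the displayed variational inequality by $(\lambda_{k+1}-\lambda_i)$ and sum, the left side is $\sum_i(\lambda_{k+1}-\lambda_i)^2\|\phi_i\|^2$, not $\sum_i(\lambda_{k+1}-\lambda_i)^2 z_i$; and the right side carries $(\lambda_{k+1}-\lambda_i)z_i$, not $\|\alpha_i\|^2$. The antisymmetry trick on the double sum $\sum_{i,j}(\lambda_{k+1}-\lambda_i)(\lambda_j-\lambda_i)a_{ij}^2$ does collapse it to $\tfrac12\sum_{i,j}b_{ij}^2$, but that leaves you with a \emph{lower} bound on $\sum_i(\lambda_{k+1}-\lambda_i)z_i$ in terms of $\|\phi_i\|^2$, which is not the statement. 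Bessel's inequality $\sum_j b_{ij}^2 \le \|\alpha_i\|^2$ alone cannot repair this mismatch of powers.

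What is missing is one more application of Cauchy--Schwarz \emph{before} you weight and sum. Since $\phi_i \perp u_j$ for $j\le k$, one has $w_i = \IP{\alpha_i - \sum_j b_{ij} u_j,\ \phi_i}$, hence
\[
w_i^2 \ \le\ \|\phi_i\|^2\bigl(\|\alpha_i\|^2 - \textstyle\sum_j b_{ij}^2\bigr).
\]
Multiplying by $(\lambda_{k+1}-\lambda_i)$ and using $(\lambda_{k+1}-\lambda_i)\|\phi_i\|^2 \le w_i$ (note $w_i\ge 0$) gives
\[
(\lambda_{k+1}-\lambda_i)\,w_i \ \le\ \|\alpha_i\|^2 - \textstyle\sum_j(\lambda_i-\lambda_j)^2 a_{ij}^2 .
\]
\emph{Now} multiply by $(\lambda_{k+1}-\lambda_i)$ and sum. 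On the left, substituting $w_i = z_i + \sum_j(\lambda_i-\lambda_j)a_{ij}^2$ and symmetrizing in $(i,j)$ produces $\sum_i(\lambda_{k+1}-\lambda_i)^2 z_i - \sum_{i,j}(\lambda_{k+1}-\lambda_i)(\lambda_i-\lambda_j)^2 a_{ij}^2$; on the right you have $\sum_i(\lambda_{k+1}-\lambda_i)\|\alpha_i\|^2 - \sum_{i,j}(\lambda_{k+1}-\lambda_i)(\lambda_i-\lambda_j)^2 a_{ij}^2$. The cross terms cancel \emph{exactly} (not merely up to Bessel), yielding the lemma. This is the step your outline glosses over, and without it the bookkeeping does not close.
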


\begin{proof}
Let $\alpha :G \to \R$.
Fix some $1 \leq k < n$.
Set
\begin{align*}
a_{ij} & = \IP{ u_i \cdot \alpha , u_j } , \\ 
\vphi_i & = u_i \cdot \alpha - \sum_{j=1}^k a_{ij} \cdot u_j  , \\
\alpha_i & =  u_i \cdot \Delta \alpha - 2 \Gamma(u_i, \alpha) , \\
b_{ij} & = \IP{ \alpha_i , u_j } , \\
w_i & = \IP{ \alpha_i , \vphi_i } , \\
z_i & = \IP{ \alpha_i , u_i \cdot \alpha }  , \\
y_i & = \Lambda(\alpha,u_i) .
\end{align*}

We collect a few observations regarding these quantities:

For all $1 \leq i,j \leq k$,
\begin{align} \label{eqn:vphi orthogonal}
 \IP{ \vphi_i , u_j} & = \IP{ u_i \cdot \alpha , u_j } - \sum_{\ell=1}^k \IP{ u_\ell , u_j } a_{i\ell}
 = a_{ij} - a_{ij} = 0 .
\end{align}

Also, $a_{ij} = a_{ji}$ and 
since the Laplacian is self-adjoint,
\begin{align*}
\lambda_j \cdot a_{ij} & = \IP{  u_i \cdot \alpha , \Delta u_j } = \IP{ \Delta (u_i \cdot \alpha) , u_j } \\
& = \IP{ \Delta u_i \cdot \alpha + u_i \cdot \Delta \alpha - 2 \Gamma(u_i , \alpha) , u_j } \\
& = \lambda_i \cdot a_{ij} + \IP{ \alpha_i , u_j } = \lambda_i \cdot a_{ij} + b_{ij} ,
\end{align*}
which proves that 
for all $1 \leq i,j \leq k$,
\begin{align} \label{eqn:bij}
b_{ij} & = - b_{ji} = (\lambda_j - \lambda_i ) \cdot a_{ij}
\end{align}

\begin{align} \label{eqn:Lap vphi}
\Delta \vphi_i & = \Delta ( u_i \cdot \alpha) - \sum_{j=1}^k \Delta u_j \cdot a_{ij} 
= \lambda_i u_i \cdot \alpha + \alpha_i - \sum_{j=1}^k \lambda_j u_j \cdot a_{ij} .
\end{align}

Since $\IP{ u_i , u_j } = \1{i=j}$,
\begin{align} \label{eqn:norm alphai}
||  \alpha_i - \sum_{j=1}^k  b_{ij} \cdot u_j ||^2  & = 
|| \alpha_i ||^2 + \sum_{j=1}^k || b_{ij} \cdot u_j ||^2 - 2 \sum_{j=1}^k b_{ij} \cdot \IP{ \alpha_i , u_j } 
\nonumber \\
& 
= || \alpha_i ||^2 - \sum_{j=1}^k |b_{ij} |^2 .
\end{align}

By \eqref{eqn:bij} we know that $-\IP{ \alpha_i , u_j } = -b_{ij} = (\lambda_i - \lambda_j) a_{ij}$, so
\begin{align} \label{eqn:wizi}
w_i & = z_i - \sum_{j=1}^k \IP{ \alpha_i , a_{ij} \cdot u_j} = z_i 
+ \sum_{j=1}^k (\lambda_i - \lambda_j) |a_{ij}|^2 .
\end{align}

By \eqref{eqn:Gamma and Lambda} we have that
\begin{align*}
\IP{2 \Gamma(u_i,\alpha) , u_i \cdot \alpha } & 
= \tfrac12 \IP{ \Delta (\alpha^2) , u_i^2 } + \Lambda(\alpha,u_i) .
\end{align*}
Thus,
\begin{align} \label{eqn:ziyi}
z_i + y_i & = \IP{ u_i \cdot \Delta \alpha - 2 \Gamma(u_i,\alpha) , u_i \cdot \alpha } 
+ \Lambda(\alpha,u_i) \nonumber \\
& = \IP{ u_i \cdot \Delta \alpha , u_i \cdot \alpha } - \tfrac12 \IP{ \Delta (\alpha^2) , u_i^2 } \nonumber \\
& = \IP{ \Delta \alpha \cdot \alpha - \tfrac12 \Delta(\alpha^2) , u_i^2 } 
= \IP{ \Gamma(\alpha) , u_i^2} .
\end{align}


By \eqref{eqn:vphi orthogonal} 
we get that $\IP{ \vphi_i , u_i \cdot \alpha } = || \vphi_i ||^2$.
Also, since $\vphi_i$ is orthogonal to $\{ u_1 , \ldots, u_k \}$, using \eqref{eqn:Lap vphi},
\begin{align*}
\lambda_{k+1} || \vphi_i ||^2 & \leq \IP{ \Delta \vphi_i , \vphi_i } \\
& = \IP{ \lambda_i u_i \cdot \alpha + \alpha_i - \sum_{j=1}^k \lambda_j u_j \cdot a_{ij}  , \vphi_i } \\
& = w_i + \lambda_i \IP{ u_i \cdot \alpha , \vphi_i } = w_i + \lambda_i || \vphi_i ||^2 .
\end{align*}
Using the Cauchy-Schwarz inequality and \eqref{eqn:norm alphai},
\begin{align*}
(\lambda_{k+1} - \lambda_i) |w_i|^2 & = (\lambda_{k+1} - \lambda_i) \Big| \langle \alpha_i 
- \sum_{j=1}^k b_{ij} \cdot u_j , \vphi_i \rangle \Big|^2 \\
& \leq (\lambda_{k+1}- \lambda_i) || \vphi_i ||^2 \cdot \Big( || \alpha_i ||^2 - \sum_{j=1}^k |b_{ij}|^2 \Big) \\
& \leq w_i \cdot \Big( || \alpha_i ||^2 - \sum_{j=1}^k |b_{ij}|^2 \Big) .
\end{align*}
Thus,
\begin{align}
\label{eqn:lambda wi bound}
(\lambda_{k+1} - \lambda_i) w_i & \leq || \alpha_i ||^2 
- \sum_{j=1}^k |\lambda_i - \lambda_j |^2 \cdot |a_{ij}|^2  . 
\end{align}
By \eqref{eqn:wizi},
\begin{align*}
\sum_{i=1}^k & |\lambda_{k+1}- \lambda_i |^2 w_i 
= \sum_{i=1}^k |\lambda_{k+1}-\lambda_i |^2 z_i
+ \sum_{i,j=1}^k |\lambda_{k+1}-\lambda_i|^2 (\lambda_i-\lambda_j) |a_{ij}|^2 \\
& = \sum_{i=1}^k |\lambda_{k+1}-\lambda_i |^2 z_i
+ \tfrac12 \sum_{i,j=1}^k \big( | \lambda_{k+1} - \lambda_i|^2 - |\lambda_{k+1}-\lambda_j|^2 \big) 
(\lambda_i-\lambda_j) |a_{ij}|^2 \\
& = \sum_{i=1}^k |\lambda_{k+1}-\lambda_i |^2 z_i
-\sum_{i,j=1}^k \big( \lambda_{k+1} - \tfrac{\lambda_i + \lambda_j }{2} \big) 
|\lambda_i-\lambda_j|^2 |a_{ij}|^2 \\
& = \sum_{i=1}^k |\lambda_{k+1}-\lambda_i |^2 z_i
- \sum_{i,j=1}^k ( \lambda_{k+1} - \lambda_i )
|\lambda_i-\lambda_j|^2 |a_{ij}|^2 .
\\
\end{align*}
Multiplying \eqref{eqn:lambda wi bound} by $\lambda_{k+1}-\lambda_i$ and summing over $i$,
we obtain
\begin{align} \label{eqn:zi bound}
\sum_{i=1}^k | \lambda_{k+1}- \lambda_i|^2 z_i & \leq \sum_{i=1}^k (\lambda_{k+1} - \lambda_i ) 
|| \alpha_i ||^2 .
\end{align}
The proof is now complete using $z_i = \IP{ \Gamma(\alpha) , u_i^2} - \Lambda(\alpha,u_i)$ 
by \eqref{eqn:ziyi}.
\end{proof}

Let $\H$ be a Hilbert space and $\alpha : V \to \H$.
We extend the definitions of the inner product and of $\Gamma , \Lambda$ by defining
\begin{align*}
2 \Gamma(\alpha,u) & = \sum_y P(x,y) (u(x) - u(y) ) \cdot (\alpha(x) - \alpha(y)) , \\
2 \Gamma(\alpha)(x) & = \sum_y P(x,y) || \alpha(x) - \alpha(y) ||^2_\H , \\
\IP{ \alpha , u } & = \sum_x \pi(x) u(x) \cdot \alpha(x) , \\
|| \alpha ||^2 & = \IP{ \alpha , \alpha } = \sum_x \pi(x) || \alpha(x) ||_\H^2 , \\
\Lambda(\alpha,u) & = \tfrac14 \sum_{x,y} c(x,y) |u(x) - u(y)|^2 \cdot || \alpha(x) - \alpha(y) ||^2_\H 
\end{align*}
Here $u:V \to \R$ is any (finitely supported) real valued function.
With this notation, we have the following theorem generalizing Lemma \ref{lem:main bound}.

\begin{thm}
\label{thm:main bound Hilbert}
Let $(V,c)$ be a network. Let $\Omega \subset V$ be a finite subset of size $n = |\Omega|$.
Let $(\lambda_i, u_i)_{i=1}^n$ be the Dirichlet system for $\Omega$.
Let $\H$ be a Hilbert space and let $\alpha : V \to \H$.

Then for any $k<n$,
$$ \sum_{i=1}^k | \lambda_{k+1} - \lambda_i |^2 \cdot 
\Big( \IP{ \Gamma(\alpha) , u_i^2 } - \Lambda(\alpha,u_i) \Big)
\leq \sum_{i=1}^k (\lambda_{k+1}- \lambda_i) \cdot 
|| u_i \cdot \Delta \alpha - 2 \Gamma(\alpha,u_i) ||^2 . $$
\end{thm}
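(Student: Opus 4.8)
The plan is to reduce Theorem~\ref{thm:main bound Hilbert} to Lemma~\ref{lem:main bound} by a coordinate-wise argument, exploiting the fact that a Hilbert space is built out of one-dimensional pieces. First I would choose an orthonormal basis $(e_\ell)_{\ell \in I}$ of $\H$ (or, if one prefers to avoid choice issues, restrict attention to the closed separable subspace spanned by the countable family $\{\alpha(x) : x \in V\}$, on which everything in the statement depends, and pick a countable orthonormal basis there). Writing $\alpha(x) = \sum_\ell \alpha_\ell(x) e_\ell$ with $\alpha_\ell : V \to \R$, each $\alpha_\ell$ is an honest real-valued test function to which Lemma~\ref{lem:main bound} applies.

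The key step is then to check that all three quadratic quantities appearing in the inequality decompose as sums over $\ell$ of the corresponding scalar quantities for $\alpha_\ell$. Concretely: $\|\alpha(x) - \alpha(y)\|_\H^2 = \sum_\ell |\alpha_\ell(x) - \alpha_\ell(y)|^2$ gives $\Gamma(\alpha)(x) = \sum_\ell \Gamma(\alpha_\ell)(x)$ and $\Lambda(\alpha,u_i) = \sum_\ell \Lambda(\alpha_\ell,u_i)$; similarly, expanding $\Delta\alpha = \sum_\ell (\Delta\alpha_\ell) e_\ell$ and $2\Gamma(\alpha,u_i) = \sum_\ell 2\Gamma(\alpha_\ell,u_i) e_\ell$ coordinate-wise, and using that the $e_\ell$ are orthonormal, one gets $\|u_i \cdot \Delta\alpha - 2\Gamma(\alpha,u_i)\|^2 = \sum_\ell \|u_i \cdot \Delta\alpha_\ell - 2\Gamma(\alpha_\ell,u_i)\|^2$, where the norm on the left is the one defined for $\H$-valued functions and the norms on the right are the ordinary $L^2(V,\pi)$ norms. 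Note each term $u_i \cdot \Delta\alpha - 2\Gamma(\alpha,u_i)$ lies in $L^2(V,\pi)\otimes\H$ and is finitely supported since $u_i$ is, so there is no convergence subtlety in these finite sums over $x$. Then I would apply Lemma~\ref{lem:main bound} to each $\alpha_\ell$, obtaining
\begin{equation*}
\sum_{i=1}^k |\lambda_{k+1} - \lambda_i|^2 \Big( \IP{\Gamma(\alpha_\ell), u_i^2} - \Lambda(\alpha_\ell,u_i) \Big) \leq \sum_{i=1}^k (\lambda_{k+1} - \lambda_i) \, \|u_i \cdot \Delta\alpha_\ell - 2\Gamma(\alpha_\ell,u_i)\|^2,
\end{equation*}
and sum over $\ell \in I$. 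The left-hand side sums (by the decompositions above, interchanging the finite sum over $i$ with the sum over $\ell$, using monotone/dominated convergence for the individual nonnegative pieces $\IP{\Gamma(\alpha_\ell),u_i^2}$ and $\Lambda(\alpha_\ell,u_i)$) to the left-hand side of the theorem, and likewise for the right-hand side, yielding the claim.

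The main obstacle is bookkeeping about convergence when $\H$ is infinite-dimensional: one must make sure the sum over $\ell$ of each scalar quantity actually converges to the $\H$-valued quantity, and that the interchange of summations is legitimate. This is handled by observing that for fixed $x,y$ the sums $\sum_\ell |\alpha_\ell(x)-\alpha_\ell(y)|^2$ are sums of nonnegative terms converging to $\|\alpha(x)-\alpha(y)\|_\H^2 < \infty$, that the sums over $x,y$ defining $\Gamma(\alpha)$, $\Lambda(\alpha,u_i)$ and $\|\cdot\|^2$ are effectively finite (restricted to the finite support of $u_i$ and its neighbours, which is finite because the network is locally finite — $\pi(x)<\infty$ with... in fact for Theorem~\ref{thm:main bound Hilbert} one only needs $u_i$ finitely supported and each vertex of finite degree, which holds whenever the relevant sums in Lemma~\ref{lem:main bound} made sense), and hence Tonelli's theorem for nonnegative terms applies throughout. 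If the network is not assumed locally finite one argues instead that $\Gamma(\alpha)(x) < \infty$ is exactly the hypothesis under which the inequality is asserted, and the same Tonelli argument goes through with countable sums. Once these interchanges are justified, the proof is just the line-by-line summation described above.
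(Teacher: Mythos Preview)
Your proof is correct and is essentially identical to the paper's: the paper also fixes $h\in\H$, sets $\alpha'(x)=\IP{\alpha(x),h}_\H$, applies Lemma~\ref{lem:main bound} to the scalar function $\alpha'$, and then sums over $h$ ranging through an orthonormal basis of $\H$. You are in fact more careful than the paper about the convergence bookkeeping in infinite dimensions, but the underlying idea is the same coordinate-wise reduction.
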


Note that when $\H=\R$ this is exactly Lemma \ref{lem:main bound}.

\begin{proof}
Let $h \in \H$ be any non-zero vector.
Define the function $\alpha' : V \to \R$ by $\alpha'(x) = \IP{ \alpha(x) , h }_\H$.
Plugging this into Lemma \ref{lem:main bound} we see that we only need to compute
$\Gamma(\alpha') , \Lambda(\alpha',u_i) , \Gamma(\alpha',u_i) , \Delta \alpha'$.
It is simple to verify that
\begin{align*}
\Delta \alpha' & = \IP{ \Delta \alpha , h }_\H , \\
\Lambda(\alpha',u_i) & = \tfrac14 \sum_{x,y} c(x,y) |u_i(x) - u_i(y)|^2 \cdot 
| \IP{ \alpha(x) - \alpha(y) , h }_\H |^2 , \\
2\Gamma(\alpha')(x) & = \sum_y P(x,y) | \IP{ \alpha(x) - \alpha(y), h }_\H |^2 , \\
2 \Gamma(\alpha' , u_i)(x) & = \sum_y P(x,y)  (u_i(x) - u_i(y) ) \cdot \IP{ \alpha(x) - \alpha(y) , h }_\H .
\end{align*}
Summing this over $h$ in an orthonormal basis for $\H$, we have the theorem.
\end{proof}

\section{The proof of main results}\label{sec:app}
\subsection{Amenable groups}

One application of Theorem \ref{thm:main bound Hilbert} is for the case of amenable groups.
Given a finitely generated group, there is a natural network one may define.
Actually, the initial data is a finitely generated group $G$ and a probability measure $\mu$
on $G$, which is assumed to be {\em symmetric}, \ie $\mu(x) = \mu(x^{-1})$.
This measure is used to construct the {\em random walk} on $G$, 
which is just the Markov chain with transition matrix $P(x,y) = \mu(x^{-1} y)$.
This Markov chain is precisely the reversible Markov chain associated to the network
on $G$ given by conductances $c(x,y) = \mu(x^{-1} y)$.
We denote this network by $(G,\mu)$, and call it the \define{Cayley network} of $G$ with respect to $\mu$.
(Since $\mu$ is a probability measure, in this case $\pi(x) = 1$ for all $x$.)

For a probability measure $\mu$ on $G$, define
$$ \mu_* : = \inf_{1 \neq y \in \supp(\mu) } \mu(y) . $$
Note that $\mu$ has finite support if and only if $\mu_*>0$.

Recall that 
Kesten's amenability criterion \cite{kesten1959full} states 
that the bottom of the spectrum of $\Delta$ is $0$ if and only if
$G$ is an amenable group.

We are now ready to prove Theorem~\ref{thm:amenable}.
\begin{proof}[Proof of Theorem~\ref{thm:amenable}]
Since $G$ is amenable and infinite, it does not have Kazhdan property (T).
(This is very well known, and an easy exercise following the definitions of property (T) and amenability.
See \eg \cite[Chapter 7]{Pete}.)
It follows from \cite{mok1995harmonic, korevaari1997global} that
there exists a Hilbert space $\H$ on which the group $G$ acts by unitary operators,
with a {\em harmonic cocycle} $\alpha : G \to \H$. 
That is, $\alpha(xy) = \alpha(x) + x. \alpha(y)$ for all $x,y \in G$ and 
$\Delta \alpha \equiv 0$.    (For a short proof see \eg \cite{ozawa2018functional}.)

Since the $G$-action is unitary, we may compute that
$$ || \alpha(x) - \alpha(xy) ||^2_\H = || \alpha(y) ||_\H^2 , $$
so
$$ 2\Gamma(\alpha)(x)  = \sum_y \mu(y) || \alpha(y) ||_\H^2 , $$
is a constant function.

Now, if $u$ is an eigenfunction of unit length, with $\Delta u = \lambda u$, then
$$ \IP{ \Gamma(\alpha) , u^2 } = \Gamma(\alpha) \cdot  \sum_x \pi(x) u(x)^2 = \Gamma(\alpha) . $$
Also,
\begin{align*}
4 \Lambda(\alpha,u) & =  \sum_{x,y} c(x,y) | u(x) - u(y) |^2 \cdot 
|| \alpha(x) - \alpha(y)  ||_\H^2
\\
& =  \sum_{x,y} \mu(y) | u(x) - u(xy) |^2 \cdot || \alpha(y) ||_\H^2 . 
\end{align*}
since for any $1 \neq y \in \supp(\mu)$,
$$ || \alpha(y) ||_\H^2 \leq \frac{1}{ \mu_* } \sum_y\mu(y) || \alpha(y) ||_\H^2 
\leq \frac{1}{\mu_* } \cdot 2\Gamma(\alpha) , $$
we get that
\begin{align*}
4 \Lambda(\alpha,u) & \leq 
\frac{1}{ \mu_* } \cdot 2 \Gamma(\alpha) \cdot \sum_{x,y} \mu(y) |u(x) - u(xy) |^2
= \frac{4}{\mu_*} \Gamma(\alpha) \cdot \lambda .
\end{align*}
Finally,
\begin{align*}
2 \Gamma(\alpha,u)(x) & = \sum_y \mu(y) (u(x)- u(xy) ) \cdot (\alpha(x) - \alpha(xy) )
= - \sum_y \mu(y) (u(x) - u(xy)) \cdot x.\alpha(y) .
\end{align*}
Since $G$ acts unitarily on $\H$, we have by Jensen's inequality,
\begin{align*}
|| 2 \Gamma (\alpha,u) ||^2 & = \sum_x || \sum_y \mu(y) (u(x)-u(xy)) \cdot \alpha(y) ||_\H^2 
\\
& \leq \sum_{x,y} \mu(y) |u(x)-u(xy)|^2 \cdot || \alpha(y)||^2_\H = 4 \Lambda(\alpha,u) .
\end{align*}
Plugging all the above into Theorem \ref{thm:main bound Hilbert} we arrive at
\begin{align*}
\sum_{i=1}^k | \lambda_{k+1}-\lambda_i|^2 \cdot \Gamma(\alpha)  & \leq
\sum_{i=1}^k (\lambda_{k+1} - \lambda_i ) \cdot \Lambda(\alpha,u_i) \cdot ( 4 + \lambda_{k+1}- \lambda_i ) \\
& \leq \sum_{i=1}^k (\lambda_{k+1} - \lambda_i ) \lambda_i \cdot 
\frac{6}{\mu_*} \cdot \Gamma(\alpha) ,
\end{align*}
where we have used that $\lambda_{k+1} - \lambda_i \leq 2$.
This completes the proof.
\end{proof}

\subsection{Groups with Abelian quotients}

For general groups with Abelian quotients, we can prove the Yang-type inequality, analogous to the result in \cite{HLS17}. 

\begin{thm} \label{thm:Zd}
Let $G$ be a finitely generated group.
Let $\alpha : G \to \Z^n$ be a surjective homomorphism.
Let $S  = \{ s_1, \ldots, s_n, k_1, \ldots, k_m \}$ be a generating set for $G$
so that $(\alpha(s_j))_{j=1}^n$ is the standard basis of $\Z^n$, and such that $\alpha(k_j)=0$
for all $j=1,\ldots,m$.
Let $\mu$ be a symmetric measure supported on $S \cup S^{-1}$.
Let $\eps = 1- \sum_{j=1}^n (\mu(s_j) + \mu(s_j^{-1}) )$.
(\eg one may take $\mu(k_j) = \mu(k_j^{-1}) = \tfrac{\eps}{2n}$ and $\mu(s_j) = \mu(s_j^{-1})=\tfrac{1-\eps}{2n}$.)

Then, the network $(G,\mu)$ satisfies the following: For any finite $\Omega\subset G$ and $k<|\Omega|,$ 
$$ \sum_{i=1}^k | \lambda_{k+1} - \lambda_i |^2 \cdot (1-\eps- \lambda_i)
\leq  8 \max_j \mu(s_j)  \cdot  \sum_{i=1}^k (\lambda_{k+1}- \lambda_i) \cdot \lambda_i . $$
\end{thm}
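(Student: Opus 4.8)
The plan is to apply Theorem~\ref{thm:main bound Hilbert} with the Hilbert space $\H = \R^n$ and the map $\alpha : G \to \R^n$ given by the homomorphism into $\Z^n \subset \R^n$. The key structural feature we exploit is that $\alpha$ is a \emph{cocycle for the trivial action}: since $\alpha(xy) = \alpha(x) + \alpha(y)$, the increment $\alpha(x) - \alpha(xg) = -\alpha(g)$ depends only on the step $g$, not on the position $x$. So for any step $g \in \supp(\mu)$ we have $\|\alpha(x) - \alpha(xg)\|^2 = \|\alpha(g)\|^2$, which equals $1$ when $g = s_j^{\pm 1}$ (since $\alpha(s_j)$ is a standard basis vector) and equals $0$ when $g = k_j^{\pm 1}$ (since $\alpha(k_j) = 0$). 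Consequently
\[
2\Gamma(\alpha)(x) = \sum_g \mu(g) \|\alpha(g)\|^2 = \sum_{j=1}^n (\mu(s_j) + \mu(s_j^{-1})) = 1 - \eps
\]
is a constant function, and therefore $\IP{\Gamma(\alpha), u_i^2} = \tfrac{1-\eps}{2}$ for each unit eigenfunction $u_i$, contributing the factor $(1-\eps)$ that must appear on the left-hand side.

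Next I would handle the two remaining terms in the statement of Theorem~\ref{thm:main bound Hilbert}. For $\Lambda(\alpha, u_i)$, using the same observation and writing $u(x) - u(xg)$ for the discrete gradient in direction $g$,
\[
4\Lambda(\alpha, u_i) = \sum_{x,g} \mu(g)\, |u_i(x) - u_i(xg)|^2\, \|\alpha(g)\|^2 = \sum_{x}\sum_{j=1}^n \big(\mu(s_j)|u_i(x)-u_i(xs_j)|^2 + \mu(s_j^{-1})|u_i(x)-u_i(xs_j^{-1})|^2\big).
\]
For the gradient term $2\Gamma(\alpha, u_i)(x) = \sum_g \mu(g)(u_i(x)-u_i(xg))(\alpha(x)-\alpha(xg)) = -\sum_g \mu(g)(u_i(x)-u_i(xg))\alpha(g)$, the crucial point is that $\alpha(g)$ for $g \in \{s_1^{\pm 1},\dots,s_n^{\pm 1}\}$ lies along the coordinate axes and the $k_j$-terms vanish; so the $e_j$-component of $2\Gamma(\alpha, u_i)(x)$ is $-\mu(s_j)(u_i(x)-u_i(xs_j)) + \mu(s_j^{-1})(u_i(x)-u_i(xs_j^{-1}))$. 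Taking the squared $\R^n$-norm, summing over $x$, and bounding $(a-b)^2 \le 2(a^2+b^2)$ componentwise (pulling out $\max_j \mu(s_j)$), one gets
\[
\|2\Gamma(\alpha, u_i)\|^2 \le 2\max_j\mu(s_j) \sum_x \sum_{j=1}^n \big(\mu(s_j)|u_i(x)-u_i(xs_j)|^2 + \mu(s_j^{-1})|u_i(x)-u_i(xs_j^{-1})|^2\big) = 8\max_j\mu(s_j)\cdot \Lambda(\alpha,u_i).
\]
Since $\Delta\alpha$ need not vanish here (unlike the harmonic cocycle case), I would also need to note that in $\| u_i \Delta\alpha - 2\Gamma(\alpha,u_i)\|^2$ the $\Delta\alpha$ term, once expanded, combines cleanly; in fact $\Delta\alpha(x) = \sum_g P(x,g\text{-shift})(\alpha(x)-\alpha(xg)) = -\sum_g \mu(g)\alpha(g)$ is itself a constant vector (equal to $0$ if $\mu$ is symmetric, since $\mu(s_j)\alpha(s_j) + \mu(s_j^{-1})\alpha(s_j^{-1}) = (\mu(s_j)-\mu(s_j^{-1}))e_j = 0$). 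So $\Delta\alpha \equiv 0$, exactly as in the amenable case, and the right-hand side of Theorem~\ref{thm:main bound Hilbert} is just $\sum_i (\lambda_{k+1}-\lambda_i)\|2\Gamma(\alpha,u_i)\|^2$.

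Finally I would assemble the pieces: plugging $\IP{\Gamma(\alpha),u_i^2} = \tfrac{1-\eps}{2}$, the bound $\|2\Gamma(\alpha,u_i)\|^2 \le 8\max_j\mu(s_j)\,\Lambda(\alpha,u_i)$, and the identity $\Lambda(\alpha,u_i) = \tfrac14\sum_x\sum_j(\cdots) $ into Theorem~\ref{thm:main bound Hilbert}, and using that $4\Lambda(\alpha,u_i) = 2\IP{\Delta u_i, u_i} = 2\lambda_i$ (this follows from $\Ee(u_i,u_i) = 2\lambda_i$ together with the fact that the $k_j$-edges and the constant weighting make $4\Lambda(\alpha,u_i)$ equal to the $s_j$-part of the Dirichlet energy — one must check the $\mu$-weighting matches, i.e.\ that $\sum_x\sum_j(\cdots)$ is exactly the full Dirichlet form $\Ee(u_i)$, which holds precisely when $\|\alpha(g)\|^2 = 1$ on the $s_j^{\pm 1}$ and $0$ on the $k_j^{\pm 1}$). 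This last bookkeeping step — matching $4\Lambda(\alpha,u_i)$ with $2\lambda_i = \Ee(u_i)$ — is where I expect the only real subtlety: one needs $\Ee(u_i) = \sum_{x,g}\mu(g)|u_i(x)-u_i(xg)|^2 = \sum_x\sum_j(\mu(s_j)|u_i(x)-u_i(xs_j)|^2 + \mu(s_j^{-1})|\cdots|^2) + (\text{$k_j$ terms})$, and the $k_j$ terms are genuinely present in $\Ee(u_i)$ but absent from $4\Lambda(\alpha,u_i)$, so one only gets the \emph{inequality} $4\Lambda(\alpha,u_i) \le \Ee(u_i) = 2\lambda_i$, which is in the right direction. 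Combining: $\sum_i|\lambda_{k+1}-\lambda_i|^2\cdot\tfrac{1-\eps}{2} \le \sum_i(\lambda_{k+1}-\lambda_i)\cdot 8\max_j\mu(s_j)\cdot\Lambda(\alpha,u_i) + \sum_i|\lambda_{k+1}-\lambda_i|^2\Lambda(\alpha,u_i)$, and moving the $\Lambda$-term with $|\lambda_{k+1}-\lambda_i|^2$ to the left (it pairs with $\tfrac{1-\eps}{2}$ to give $\tfrac12(1-\eps-2\Lambda\cdot\frac{2}{1}\cdots)$ — more precisely, using $4\Lambda(\alpha,u_i)\le 2\lambda_i$ directly inside the LHS correction) yields $\sum_i|\lambda_{k+1}-\lambda_i|^2(1-\eps-\lambda_i) \le 8\max_j\mu(s_j)\sum_i(\lambda_{k+1}-\lambda_i)\lambda_i$ after dividing by the appropriate constant, which is the claimed inequality.
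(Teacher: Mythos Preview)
Your proposal is correct and follows essentially the same route as the paper's proof: apply Theorem~\ref{thm:main bound Hilbert} with $\H=\R^n$ and $\alpha$ the given homomorphism, use that $\alpha$ is a cocycle so $2\Gamma(\alpha)\equiv 1-\eps$ and $\Delta\alpha\equiv 0$, bound $4\Lambda(\alpha,u_i)\le \Ee(u_i)=2\lambda_i$ (inequality, since the $k_j$-edges contribute to $\Ee$ but not to $\Lambda$), and bound $\|2\Gamma(\alpha,u_i)\|^2$ by $4\max_j\mu(s_j)\,\lambda_i$ via $(a-b)^2\le 2(a^2+b^2)$ on each coordinate. Your final assembly paragraph is a bit tangled in its exposition, but the substance is right and matches the paper; for clarity you could simply write $\IP{\Gamma(\alpha),u_i^2}-\Lambda(\alpha,u_i)\ge\tfrac12(1-\eps-\lambda_i)$ on the left and $\|2\Gamma(\alpha,u_i)\|^2\le 4\max_j\mu(s_j)\,\lambda_i$ on the right, then multiply through by~$2$.
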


\begin{rem}
When we choose $\mu(k_j) = \mu(k_j^{-1}) = \tfrac{\eps}{2n}$ and $\mu(s_j) = \mu(s_j^{-1})=\tfrac{1-\eps}{2n}$,
we get the Yang-type inequality up to an $\eps$-defect, with constant at most $\frac{4}{n}$.
\end{rem}

\begin{rem}
The case $G \cong \Z^n$ was already treated in \cite{HLS17}, where the same result was shown,
using similar methods. This is the case $\eps=0$ and $\mu(s_j) = \mu(s_j^{-1}) = \frac{1}{2 n}$ 
in the above theorem.
\end{rem}

\begin{proof}
The main advantage of $\alpha$ being a homomorphism is that 
$$ \mu(y) \alpha(y) = 
\begin{cases}
\pm \mu(s_j) e_j & y = (s_j)^{\pm 1} , \\
0, & \textrm{ otherwise. } 
\end{cases},
$$ 
where $\{e_j\}_{j=1}^n$ is the standard basis of $\Z^n.$ 
Thus, for the Euclidean Hilbert space $\H = \R^n$,
$$ 2 \Gamma(\alpha)(x) = \sum_y \mu(y) || \alpha(x) - \alpha(xy) ||_\H^2 
= \sum_{j=1}^n (\mu(s_j) + \mu(s_j^{-1}) ) = 1-\eps , $$
for any $x \in G$.
Also, $\Delta \alpha \equiv 0$.
Now, 
if $u$ is an eigenfunction of unit length, with $\Delta u = \lambda u$, then
$$ \IP{ \Gamma(\alpha) , u^2 } = \Gamma(\alpha)= \tfrac12(1-\eps) . $$
We may bound
\begin{align*}
4 \Lambda(\alpha,u) &  = \sum_{x,y} \mu(y) |u(x) - u(xy) |^2 \cdot ||\alpha(y) ||_\H^2 \\
& = \sum_x \sum_{j=1}^n \mu(s_j) \Big( |u(x) - u(xs_j) |^2 + |u(x) - u(xs_j^{-1}) |^2 \Big) \\
& \leq \sum_{x,y } \mu(y) |u(x) - u(xy)|^2  = 2 \lambda .
\end{align*}

As in the proof of Theorem \ref{thm:amenable}, 
\begin{align*}
2 \Gamma(\alpha,u)(x)  & = \sum_{j=1}^n \mu(s_j) (u(x) - u(xs_j) - u(x) + u(x s_j^{-1} ) ) \cdot \alpha(s_j) , \\
|| 2 \Gamma(\alpha,u) ||^2 & = \sum_x \sum_{j=1}^n \mu(s_j)^2 | u(xs_j^{-1}) - u(xs_j) |^2\\
&\leq 2\sum_x \sum_{j=1}^n \mu(s_j)^2 (| u(x) - u(xs_j) |^2+ | u(x) - u(xs_j^{-1}) |^2)
\\
& \leq 2\max_j \mu(s_j) \sum_{x,y} \mu(y) |u(x) - u(xy) |^2 = \max_j \mu(s_j) \cdot 4 \lambda . 
\end{align*}

Plugging all of this into Theorem \ref{thm:main bound Hilbert}, we arrive at
$$ \sum_{i=1}^k | \lambda_{k+1} - \lambda_i |^2 \cdot (1-\eps-\lambda_i )
\leq  8 \max_j \mu(s_j)  \cdot  \sum_{i=1}^k (\lambda_{k+1}- \lambda_i) \cdot \lambda_i . $$
\end{proof}

\subsection{Trees}

In this section, we prove the Yang-type inequality for $d$-regular tree $\T_d,$ $d\geq 3,$ with the conductances of the edges $c(x,y) = \1{x \sim y} \tfrac1d.$

\begin{proof}[Proof of Theorem~\ref{thm:trees}]
Fix a ray to infinity, and an origin $o$.
Let $b$ be the {\em Buseman function} corresponding to the ray with $b(o)=0$.
That is: let $o=x_0 \sim x_1 \sim \cdots \sim x_n \sim x_{n+1} \sim \cdots$ be an infinite
{\em simple} path, so $x_i \neq x_j$ for all $i \neq j$.
Becase $\T_d$ is a tree, this path is necessarily a geodesic: the distance between $x_j, x_i$ 
in the graph is always $|j-i|$. This path is the {\em ray} mentioned above.
Now, for any $j \geq 0$ set $b(x_j) : = -j$.
Furthermore, 
for any vertex $z$, let $z_*$ be the closest vertex to $z$ from the above path.
Set $b(z) = b(z_*) + \dist(z,z_*)$.

The important properties of $b$ are thus: 
$b:\T_d \to \Z$ is a function such that $b(o) = 0$ and such that
every vertex $x$ has $d-1$ neighbors $y\sim x$ with $b(y) = b(x)+1$,
and exactly one neighbor $\vec x \sim x$ with $b(\vec x) = b(x) -1$. One easily sees that
$$ 2\Gamma(b)(x)= 1 \qquad \forall x \in \T_d . $$

It is also simple to  check that the function $f(x) = (\tfrac{\xi}{\sqrt{ d-1} })^{b(x)}$ satisfies
$$ \Delta f(x) = f(x) \cdot \big( 1  - \tfrac{ \sqrt{ d-1} }{d} \cdot (\xi + \xi^{-1} ) \big) . $$
Hence, if $\lambda = 1 - \tfrac{2 \sqrt{d-1} }{d}$ (which corresponds to choosing $\xi=1$, 
maximizing the above expression) 
then $\Delta f = \lambda f$.
Coincidentally, this is the bottom of the $L^2$ spectrum of $\Delta$,
\ie $\lambda_{\min} = 1 - \tfrac{2 \sqrt{d-1} }{d}$.

For any $x$ let $\vec x$ be the unique vertex with $b(\vec x) = b(x) - 1$.
For a function $f$ let $\vec{f}(x) : = f(\vec x)$.
Note that as $x$ ranges over the whole graph, the pair $(x,\vec x)$ ranges over
all edges in the graph, each edge counted exactly once in the direction of decreasing the Buseman 
function $b$.
Thus,
\begin{align*}
|| f - \vec f ||^2 & = \sum_x | f(x) - f ( \vec x) |^2 =   
\tfrac12 \sum_{x \sim y} |f(x) - f(y)|^2 \\
& = \tfrac{d}2 \sum_{x,y} c(x,y) | f(x) - f(y) |^2 = d \iP{ \Delta f , f } .
\end{align*}
Also, the map $x \mapsto \vec x$ is a $(d-1)$-to-$1$ map.  So,
\begin{align} \label{eqn:f and vec f 1}
|| \vec f ||^2 &  = \sum_x  |f(\vec x)|^2 = \sum_y \sum_{x \ : \ \vec x = y } |f(y)|^2
= (d-1) || f ||^2 .
\end{align}
Thus,
\begin{align} \label{eqn:f and vec f 2}
d \iP{ \Delta f , f } & = || f - \vec f ||^2 = d \cdot || f ||^2 - 2 \iP{ f, \vec f } . 
\end{align}

Note that the Buseman function satisfies:
$$ \Delta b (x) = \sum_y P(x,y) (b(x) - b(y)) = - \tfrac{d-2}{d} = : -\gamma , $$
and also $|b(x) - b(y) |=1$ for any $x \sim y$.

Let $u$ be an eigenfunction $\Delta u = \lambda u$.
Note that 
$$ \IP{ 2 \Gamma(b,u) , u } = \tfrac12 \Ee( b,u^2) = \IP{ \Delta b ,u^2 } = -\gamma || u ||^2 . $$
Thus,
\begin{align} \label{eqn:Gamma(b,u) 1}
|| 2\Gamma(b,u) & - u \Delta b ||^2  = 4 || \Gamma(b,u) ||^2 + \gamma^2 \cdot ||u||^2 
+ 2\gamma \IP{ 2 \Gamma(b,u) , u } \nonumber \\
& = 4 || \Gamma(b,u) ||^2 - \gamma^2 \cdot ||u||^2
\end{align}
Also,
\begin{align*} 
2 \Gamma (b,u) (x) & = \sum_y c(x,y) ( b(x) - b(y) ) (u(x) - u(y) ) \nonumber \\
& = - \sum_{y \neq \vec x}  c(x,y) (u(x) - u(y)) + c(x,\vec x) (u(x) - u(\vec x) ) \nonumber  \\
& = - \Delta u (x) + \tfrac2d (u(x) - u(\vec x)) = (\tfrac2d - \lambda) u(x) - \tfrac2d \vec u (x) , 
\end{align*}
so using \eqref{eqn:f and vec f 1} and \eqref{eqn:f and vec f 2}, assuming that $||u || =1$,
\begin{align} \label{eqn:Gamma(b,u) 2}
|| 2 \Gamma(b,u) ||^2 & = (1 - \lambda - \gamma )^2 || u ||^2 + \frac{4}{d^2} || \vec u ||^2 
- \tfrac{4}{d} (1 - \lambda - \gamma  ) \IP{ u, \vec u} \nonumber \\
& = (1-\lambda)^2 + \gamma^2 - 2 \gamma(1-\lambda) + \frac{4}{d^2} (d-1) 
- 2 (1-\lambda - \gamma) (1-\lambda ) \nonumber  \\
& = \gamma^2 + (1-\lambda_{\min})^2
- (1-\lambda )^2  .
\end{align}
Finally, 
\begin{align}
4 \Lambda(b,u) & = \sum_{x,y} c(x,y) |b(x)-b(y)|^2 \cdot |u(x) - u(y)|^2 = 2 \lambda .
\end{align}
Combining this with \eqref{eqn:Gamma(b,u) 1}, \eqref{eqn:Gamma(b,u) 2}, and plugging into 
Lemma \ref{lem:main bound}, we have that:
\begin{align*}
\sum_{i=1}^k |\lambda_{k+1} - \lambda_i |^2 \cdot (1 - \lambda_i )  &\leq 2
\sum_{i=1}^k (\lambda_{k+1} - \lambda_i) \cdot ( \lambda_i - \lambda_{\min} ) 
\cdot(1-\lambda_i +1- \lambda_{\min} ) \\
&\leq \frac{8\sqrt{d-1}}{d} \cdot
\sum_{i=1}^k (\lambda_{k+1} - \lambda_i) \cdot ( \lambda_i - \lambda_{\min} ),
\end{align*} 
where we used $\lambda_i\geq \lambda_{\min}=1-\frac{2\sqrt{d-1}}{d}.$ 
\end{proof}

\section{Applications of Yang-type inequalities}\label{sec:imp}
In this section, we derive some applications of the Yang-type inequality on graphs. 

Let $(V,c)$ be the network with the bottom of the spectrum $\lambda_{\min}.$
For any finite subset $\Omega,$ let $\{\lambda_i\}_{i=1}^{|\Omega|}$ be the Dirichlet eigenvalues of the Laplace on $\Omega.$ Set 
\begin{align} \label{eq:def:mu}
\mu_i & := \lambda_i-\lambda_{\min}\geq 0,\quad 1\leq i\leq |\Omega|.
\end{align}
By the trace of the Laplacian,
$$\sum_{i=1}^{|\Omega|}\lambda_i\leq |\Omega|.$$ Hence for any $1\leq k\leq|\Omega|,$$$\sum_{i=1}^{k}(1-\lambda_i)\geq 0.$$

\begin{cor} Suppose that the network $(V,c)$ satisfies the Yang-type inequality  \eqref{eq:Yangtype}.
Then for any finite subset $\Omega,$
$$\lambda_2-\lambda_{\min}\leq (\frac{C_{YT}}{1-\lambda_1}+1)(\lambda_1-\lambda_{\min}).$$
\end{cor}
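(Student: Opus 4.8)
The plan is to specialize the Yang-type inequality \eqref{eq:Yangtype} to the case $k=1$ and extract the stated bound on $\lambda_2 - \lambda_{\min}$. First I would write out \eqref{eq:Yangtype} with $k=1$: the left-hand side is a single term $|\lambda_2 - \lambda_1|^2 (1-\lambda_1)$ and the right-hand side is $C_{YT} (\lambda_2 - \lambda_1)(\lambda_1 - \lambda_{\min})$. Since $\lambda_2 \geq \lambda_1$, if $\lambda_2 = \lambda_1$ the claimed inequality is trivial (both sides of the conclusion compare $\lambda_2-\lambda_{\min}=\lambda_1-\lambda_{\min}$, and the right side has the extra factor $\frac{C_{YT}}{1-\lambda_1}+1 \geq 1$ since $\lambda_1 \leq 1$ by the trace bound with $k=1$); so I may assume $\lambda_2 > \lambda_1$ and divide both sides of the $k=1$ inequality by $\lambda_2 - \lambda_1 > 0$, obtaining
$$ (\lambda_2 - \lambda_1)(1-\lambda_1) \leq C_{YT}(\lambda_1 - \lambda_{\min}). $$

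Next I would rewrite $\lambda_2 - \lambda_1 = (\lambda_2 - \lambda_{\min}) - (\lambda_1 - \lambda_{\min})$ and solve for $\lambda_2 - \lambda_{\min}$. Dividing by $1-\lambda_1$ (which is positive: the trace bound $\sum_{i=1}^{|\Omega|}\lambda_i \leq |\Omega|$ gives $\lambda_1 \leq 1$, and in fact one needs $\lambda_1 < 1$ for the division to be valid — this is the one genuine hypothesis to check, and it follows since if $\lambda_1 = 1$ then $1-\lambda_1 = 0$ forces the left side of \eqref{eq:Yangtype} to vanish, handled separately or noting $\lambda_1<1$ strictly when $|\Omega|\geq 2$ as $\lambda_1$ is the smallest of $|\Omega|$ eigenvalues averaging $\leq 1$; alternatively the statement is vacuous/trivial in the degenerate case) yields
$$ \lambda_2 - \lambda_{\min} \leq \frac{C_{YT}}{1-\lambda_1}(\lambda_1 - \lambda_{\min}) + (\lambda_1 - \lambda_{\min}) = \left(\frac{C_{YT}}{1-\lambda_1} + 1\right)(\lambda_1 - \lambda_{\min}), $$
which is exactly the claim.

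The main obstacle — really the only subtlety — is justifying $1 - \lambda_1 > 0$ so that the division is legitimate; everything else is elementary algebra. I expect this is dispatched by the trace remark already recorded in the excerpt, namely $\sum_{i=1}^{k}(1-\lambda_i) \geq 0$ for all $k$, applied with $k=1$ to get $\lambda_1 \leq 1$, together with the observation that equality $\lambda_1 = 1$ can only occur in degenerate situations where the corollary holds trivially (or one simply restricts, as is standard, to the case $\lambda_1 < 1$ in which the Yang-type inequality carries content). In writing this up I would keep the argument to a few lines: instantiate $k=1$, reduce to $(\lambda_2-\lambda_1)(1-\lambda_1) \leq C_{YT}(\lambda_1-\lambda_{\min})$, and rearrange.
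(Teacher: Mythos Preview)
Your proposal is correct and matches the paper's approach exactly: the paper's proof is the single sentence ``This follows from the Yang-type inequality \eqref{eq:Yangtype} for $k=1$,'' which is precisely the specialization you carry out, with the algebra spelled out.
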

\begin{proof} This follows from the Yang-type inequality  \eqref{eq:Yangtype} for $k=1.$
\end{proof}

The Yang-type inequality implies the following result, which is a discrete analog of Yang's second inequality.

\begin{cor}\label{cor:Yang2} Suppose that the network $(V,c)$ satisfies the Yang-type inequality  \eqref{eq:Yangtype}.
Then for any finite subset $\Omega,$ if $\lambda_{k}\leq 1+C_{YT}$ for some $1\leq k<|\Omega|,$ then 
$$\lambda_{k+1}-\lambda_{\min}\leq \frac{\sum_{i=1}^k(\lambda_i-\lambda_{\min})(1+C_{YT}-\lambda_i)}{\sum_{i=1}^k(1-\lambda_i)}.$$
\end{cor}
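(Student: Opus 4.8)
The plan is to mimic the classical passage from Yang's first inequality to his second: regard the Yang-type inequality \eqref{eq:Yangtype} as a quadratic inequality in a single unknown, and then locate $\lambda_{k+1}$ relative to the asserted bound. Write $C = C_{YT}$, $m = \lambda_{\min}$, $\Lambda = \lambda_{k+1}$, and put $\mu_i = \lambda_i - m$ (as in \eqref{eq:def:mu}), $M = 1-m$, $x = \Lambda - m \geq 0$, so that $\Lambda - \lambda_i = x - \mu_i$ and $1-\lambda_i = M-\mu_i$. We may assume $A := \sum_{i=1}^k(1-\lambda_i) > 0$, since otherwise the right-hand side of the asserted inequality is undefined (only $A \geq 0$ was noted above).

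First I would substitute these expressions into \eqref{eq:Yangtype} and expand in powers of $x$, arriving at a quadratic inequality
$$ A x^2 - (2B + CE) x + (D + CF) \leq 0 , $$
with $B = \sum_{i=1}^k \mu_i(M-\mu_i)$, $D = \sum_{i=1}^k \mu_i^2(M-\mu_i)$, $E = \sum_{i=1}^k \mu_i \geq 0$, $F = \sum_{i=1}^k \mu_i^2$. A direct computation identifies the claimed upper bound: $\sum_{i=1}^k(\lambda_i - m)(1+C-\lambda_i) = \sum_{i=1}^k\mu_i(M-\mu_i+C) = B + CE$, so the bound is $t := (B+CE)/A$.

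Next I would evaluate $q(x') := Ax'^2 - (2B+CE)x' + (D+CF)$ at $x' = t$. Using $At = B + CE$ one finds $A\,q(t) = (AD - B^2) + C(AF - BE)$, and the two Cauchy--Schwarz-type defects have the closed-form double-sum expressions
$$ AD - B^2 = \tfrac12 \sum_{i,j=1}^k (1-\lambda_i)(1-\lambda_j)(\lambda_i-\lambda_j)^2 , \qquad AF - BE = \tfrac{1-m}{2}\sum_{i,j=1}^k (\lambda_i-\lambda_j)^2 , $$
so that
$$ A\,q(t) = \tfrac12 \sum_{i,j=1}^k (\lambda_i-\lambda_j)^2\Big( (1-\lambda_i)(1-\lambda_j) + C(1-m) \Big) . $$
The key point is that each summand is nonnegative, and this is exactly where the hypothesis $\lambda_k \leq 1+C$ is used: for $i,j \leq k$ we have $\lambda_i, \lambda_j \leq \lambda_k \leq 1+C$, whereas $\lambda_{\min} < 1$ for any nontrivial network (indeed $\sup \mathrm{spec}(P) > 0$). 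A short case analysis on the signs of $1-\lambda_i$ and $1-\lambda_j$ then yields $(1-\lambda_i)(1-\lambda_j) \geq -C(1-m)$ in all cases, hence $q(t) \geq 0$.

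Finally, since $A > 0$ the parabola $q$ opens upward; \eqref{eq:Yangtype} says $q(x) \leq 0$, so $x$ lies between the two real roots of $q$, while $q(t) \geq 0$ forces $t$ outside the open interval between those roots. Because the vertex of $q$ sits at $(2B+CE)/(2A) \leq (B+CE)/A = t$ (using $CE \geq 0$), the point $t$ lies to the right of the larger root, and hence $x \leq t$, which is the asserted inequality. I expect the main obstacle to be precisely the sign analysis giving $q(t) \geq 0$ — verifying $(1-\lambda_i)(1-\lambda_j) + C_{YT}(1-\lambda_{\min}) \geq 0$ for all $i,j \leq k$, which hinges on the hypothesis $\lambda_k \leq 1+C_{YT}$ together with $\lambda_{\min} < 1$; the remaining algebra is routine.
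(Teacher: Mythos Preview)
Your argument is correct, but it takes a genuinely different route from the paper. The paper does not view \eqref{eq:Yangtype} as a quadratic in $\mu_{k+1}$; instead it writes the inequality as $\sum_i a_i b_i \leq 0$ with $a_i = \mu_{k+1}-\mu_i$ and $b_i = (\mu_{k+1}-\mu_i)(1-\lambda_i) - C\mu_i$, observes that both sequences are non-increasing in $i$ (the monotonicity of $b_i$ is exactly where $\lambda_k \leq 1+C$ enters, via the vertex of the quadratic $f(x)=(\mu_{k+1}-x)(1-x-\lambda_{\min})-Cx$), and then applies Chebyshev's sum inequality to conclude $(\sum_i a_i)(\sum_i b_i) \leq 0$; since $\sum_i a_i > 0$ this forces $\sum_i b_i \leq 0$, which rearranges to the claim. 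Your approach instead evaluates the quadratic $q$ at the target value $t$ and uses the double-sum identities to obtain $Aq(t) = \tfrac12\sum_{i,j}(\lambda_i-\lambda_j)^2\big[(1-\lambda_i)(1-\lambda_j)+C(1-\lambda_{\min})\big]\geq 0$, with the hypothesis $\lambda_k\leq 1+C$ now entering through the pointwise bound $(1-\lambda_i)(1-\lambda_j)\geq -C(1-\lambda_{\min})$. The Chebyshev argument is shorter and avoids the extra observation $\lambda_{\min}<1$; your quadratic approach is more explicit and makes the location of $\mu_{k+1}$ relative to the roots transparent, which can be useful if one wants sharper information (e.g.\ a two-sided estimate). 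Both are standard manoeuvres in this circle of ideas and arrive at the same conclusion.
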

\begin{proof}
Let $C = C_{YT}$.
Without loss of generality, we may assume that $\lambda_{k+1}>\lambda_1,$ otherwise the result is trivial. By the Yang-type inequality \eqref{eq:Yangtype},
$$\frac1k\sum_i(\mu_{k+1}-\mu_i)\left[(\mu_{k+1}-\mu_i)(1-\mu_i-\lambda_{\min})-C\mu_i\right]\leq 0,$$ where $\{\mu_i\}_i$ is defined in \eqref{eq:def:mu} and $C=C_{YT}.$ Set $a_i:=\mu_{k+1}-\mu_i$ and $$b_i:=(\mu_{k+1}-\mu_i)(1-\mu_i-\lambda_{\min})-C\mu_i.$$ Note that the function $$f(x):=(\mu_{k+1}-x)(1-x-\lambda_{\min})-Cx$$ is non-increasing in $(-\infty,\frac12(1+C+\mu_{k+1}-\lambda_{\min})].$ Moreover, the assumption $\lambda_{k}\leq 1+C$ yields that $$\mu_i\leq \frac12(1+C+\mu_{k+1}-\lambda_{\min}),$$ which implies that $b_i$ is non-increasing. Using Chebyshev's inequality, \ie $$\sum_ia_ib_i\geq k\sum_ia_i\sum_ib_i,$$ we have
$$\left(\mu_{k+1}-\frac1k\sum_{i=1}^k\mu_i\right)\left[\mu_{k+1}\cdot\frac1k\sum_{i=1}^k(1-\lambda_i)-\frac1k\sum_{i=1}^k\mu_i(1+C-\lambda_i)\right]\leq 0.$$ Note that by $\lambda_{k+1}>\lambda_1,$ $$\lambda_{k+1}>\frac1k\sum_{i=1}^k\lambda_i.$$ Thus,
$$\mu_{k+1}\leq \frac{\sum_{i=1}^k\mu_i(1+C-\lambda_i)}{\sum_{i=1}^k(1-\lambda_i)},$$ which proves the theorem.
\end{proof}

By the above result, we derive the following inequality, a discrete analog of the Hile-Protter inequality.
\begin{cor}\label{cor:HP11} Suppose that the network $(V,c)$ satisfies the Yang-type inequality  \eqref{eq:Yangtype}.
Then for any finite subset $\Omega,$ if $\lambda_{k}\leq 1+C_{YT}$ for some $1\leq k<|\Omega|,$ then 
\begin{equation*}\sum_{i=1}^k\frac{\lambda_i-\lambda_{\min}}{\lambda_{k+1}-\lambda_i}\geq \frac{1}{C_{YT}}\sum_{i=1}^k(1-\lambda_i).\end{equation*}
\end{cor}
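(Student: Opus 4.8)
The plan is to deduce Corollary~\ref{cor:HP11} directly from Corollary~\ref{cor:Yang2}, which already packages the main work (the Chebyshev-inequality argument under the hypothesis $\lambda_k \le 1 + C_{YT}$). Write $C = C_{YT}$ and recall $\mu_i = \lambda_i - \lambda_{\min}$. From Corollary~\ref{cor:Yang2} we have, under the standing assumption $\lambda_k \le 1+C$,
$$ \mu_{k+1} \le \frac{\sum_{i=1}^k \mu_i (1+C-\lambda_i)}{\sum_{i=1}^k (1-\lambda_i)} . $$
The idea is to rewrite the numerator $1 + C - \lambda_i = (C) + (1 - \lambda_i)$ and, more usefully, to split $1 + C - \lambda_i$ relative to $\mu_{k+1}$. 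Specifically, I would multiply both sides by $\sum_{i=1}^k (1-\lambda_i) \ge 0$ (nonnegativity was noted just before the first corollary, via the trace bound) and rearrange to get $\sum_{i=1}^k (1-\lambda_i)(\mu_{k+1} - \mu_i) \le C \sum_{i=1}^k \mu_i$ after absorbing the $\mu_i(1-\lambda_i)$ terms; here one uses $\mu_{k+1}-\mu_i = \lambda_{k+1}-\lambda_i$.

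Concretely: starting from $\mu_{k+1}\sum_i (1-\lambda_i) \le \sum_i \mu_i(1+C-\lambda_i) = C\sum_i \mu_i + \sum_i \mu_i(1-\lambda_i)$, subtract $\sum_i \mu_i (1-\lambda_i)$ from both sides to obtain
$$ \sum_{i=1}^k (1-\lambda_i)(\mu_{k+1}-\mu_i) \le C \sum_{i=1}^k \mu_i = C \sum_{i=1}^k (\lambda_i - \lambda_{\min}) . $$
Now I would like to pass from this to a lower bound on $\sum_i \frac{\lambda_i-\lambda_{\min}}{\lambda_{k+1}-\lambda_i}$. The natural tool is the Cauchy–Schwarz (or Chebyshev-type) inequality applied to the pairing of the sequences $\sqrt{(1-\lambda_i)(\lambda_{k+1}-\lambda_i)}$ and $\sqrt{(\lambda_i-\lambda_{\min})/(\lambda_{k+1}-\lambda_i)}$; indeed
$$ \Big( \sum_{i=1}^k (\lambda_i-\lambda_{\min}) \Big)^2 = \Big( \sum_{i=1}^k \sqrt{(1-\lambda_i)(\lambda_{k+1}-\lambda_i)} \cdot \sqrt{\tfrac{(\lambda_i-\lambda_{\min})^2}{(1-\lambda_i)(\lambda_{k+1}-\lambda_i)}} \Big)^2 , $$
but this does not quite land on the stated form. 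Instead I think the cleaner route is: apply Cauchy–Schwarz in the form $\sum_i a_i b_i \le (\sum_i a_i^2 c_i)^{1/2}(\sum_i b_i^2/c_i)^{1/2}$ is overkill; better to use the elementary bound that for positive $p_i, q_i$ one has $\big(\sum_i \sqrt{p_i q_i}\big)^2 \le \big(\sum_i p_i\big)\big(\sum_i q_i\big)$ with $p_i = (1-\lambda_i)(\lambda_{k+1}-\lambda_i)$ and $q_i = \frac{\lambda_i-\lambda_{\min}}{\lambda_{k+1}-\lambda_i} \cdot \frac{\lambda_i - \lambda_{\min}}{1-\lambda_i}$... this is getting complicated.

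The honest expectation is that the main obstacle is choosing the right weights so that the displayed consequence of Corollary~\ref{cor:Yang2}, namely $\sum_i (1-\lambda_i)(\lambda_{k+1}-\lambda_i) \le C\sum_i(\lambda_i-\lambda_{\min})$, converts into $\sum_i \frac{\lambda_i-\lambda_{\min}}{\lambda_{k+1}-\lambda_i} \ge \frac1C \sum_i (1-\lambda_i)$. The clean way is Cauchy–Schwarz with the split $1-\lambda_i = \sqrt{(1-\lambda_i)(\lambda_{k+1}-\lambda_i)}\cdot\sqrt{\tfrac{1-\lambda_i}{\lambda_{k+1}-\lambda_i}}$: then
$$ \sum_{i=1}^k (1-\lambda_i) \le \Big( \sum_{i=1}^k (1-\lambda_i)(\lambda_{k+1}-\lambda_i) \Big)^{1/2} \Big( \sum_{i=1}^k \frac{1-\lambda_i}{\lambda_{k+1}-\lambda_i} \Big)^{1/2} , $$
which still has $1-\lambda_i$ rather than $\lambda_i - \lambda_{\min}$ in the last factor. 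So the truly correct pairing must be $\lambda_i - \lambda_{\min} = \sqrt{\tfrac{\lambda_i-\lambda_{\min}}{\lambda_{k+1}-\lambda_i}}\cdot\sqrt{(\lambda_i-\lambda_{\min})(\lambda_{k+1}-\lambda_i)}$, giving
$$ \Big(\sum_{i=1}^k (\lambda_i-\lambda_{\min})\Big)^2 \le \Big(\sum_{i=1}^k \frac{\lambda_i-\lambda_{\min}}{\lambda_{k+1}-\lambda_i}\Big)\Big(\sum_{i=1}^k (\lambda_i-\lambda_{\min})(\lambda_{k+1}-\lambda_i)\Big) ; $$
combining with the Corollary~\ref{cor:Yang2} consequence rewritten with $\lambda_i-\lambda_{\min}$ in place of $1-\lambda_i$ (which requires redoing the rearrangement keeping the factor $(\lambda_i-\lambda_{\min})$ rather than $(1-\lambda_i)$ — this is where I should be careful) yields exactly the claimed inequality. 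So the real plan is: (1) rerun the rearrangement of Corollary~\ref{cor:Yang2}'s conclusion to extract $\sum_i (\lambda_i-\lambda_{\min})(\lambda_{k+1}-\lambda_i) \le C \sum_i (\lambda_i - \lambda_{\min})(1 - \lambda_i)$ — or more likely the inequality $\sum_i(\lambda_i-\lambda_{\min})(\lambda_{k+1}-\lambda_i)\le C\sum_i(\lambda_i-\lambda_{\min})$ using $1-\lambda_i \le 1$ and $1-\lambda_i \ge 0$; (2) apply Cauchy–Schwarz as above; (3) divide through. The subtle point — and the main obstacle — is making sure the manipulations in step (1) preserve the correct direction of inequality given that $1-\lambda_i$ can be positive or (if $\lambda_i > 1$) negative, and reconciling this with the hypothesis $\lambda_k \le 1+C$; I would double-check whether an additional hypothesis such as $\lambda_k \le 1$ is implicitly needed, or whether the stated $\lambda_k \le 1 + C_{YT}$ genuinely suffices through the $f(x)$ monotonicity already exploited in Corollary~\ref{cor:Yang2}.
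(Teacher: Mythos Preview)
Your proposal has a genuine gap. You correctly rearrange Corollary~\ref{cor:Yang2} to obtain
\[
\sum_{i=1}^k (1-\lambda_i)(\lambda_{k+1}-\lambda_i) \le C\sum_{i=1}^k(\lambda_i-\lambda_{\min}),
\]
and you correctly write down the Cauchy--Schwarz inequality
\[
\Big(\sum_{i=1}^k \mu_i\Big)^2 \le \Big(\sum_{i=1}^k \frac{\mu_i}{\lambda_{k+1}-\lambda_i}\Big)\Big(\sum_{i=1}^k \mu_i(\lambda_{k+1}-\lambda_i)\Big).
\]
But these two do not mesh: the first controls $\sum_i (1-\lambda_i)(\lambda_{k+1}-\lambda_i)$ while the second needs control on $\sum_i \mu_i(\lambda_{k+1}-\lambda_i)$, and there is no direct relation between the two without a further idea. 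The bounds you hope for in your ``step (1)'', such as $\sum_i \mu_i(\lambda_{k+1}-\lambda_i)\le C\sum_i\mu_i$, simply do not follow from Corollary~\ref{cor:Yang2}; you cannot swap the factor $(1-\lambda_i)$ for $\mu_i$ using only $1-\lambda_i\le 1$.

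The paper supplies exactly the missing ingredient: one more Chebyshev inequality. Since $(\mu_i)$ is nondecreasing and $(1-\lambda_i)$ nonincreasing, Chebyshev gives
\[
\frac{\sum_{i=1}^k \mu_i(1-\lambda_i)}{\sum_{i=1}^k (1-\lambda_i)} \le \frac{1}{k}\sum_{i=1}^k \mu_i .
\]
Combined with the split $1+C-\lambda_i = C + (1-\lambda_i)$ in Corollary~\ref{cor:Yang2}, this yields
\[
\mu_{k+1} - \tfrac1k\sum_{i=1}^k\mu_i \le \frac{C\sum_{i=1}^k\mu_i}{\sum_{i=1}^k(1-\lambda_i)} .
\]
Now the paper applies Jensen's inequality to the convex function $g(x)=x/(\mu_{k+1}-x)$, obtaining $\sum_i \frac{\mu_i}{\mu_{k+1}-\mu_i}\ge \frac{\sum_i\mu_i}{\mu_{k+1}-\frac1k\sum_i\mu_i}$, and the two displayed inequalities combine immediately to give the claim. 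Your Cauchy--Schwarz is morally the same Jensen step, but it lands on the quantity $\sum_i\mu_i(\mu_{k+1}-\mu_i)$ in the denominator, which is why you then cannot close; the Jensen formulation places $\mu_{k+1}-\bar\mu$ in the denominator, which matches what the Chebyshev step provides. The hypothesis $\lambda_k\le 1+C_{YT}$ is used only inside Corollary~\ref{cor:Yang2}; no additional assumption like $\lambda_k\le 1$ is required.
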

\begin{proof} Without loss of generality, we may assume that $\lambda_k<\lambda_{k+1}.$ Let $C=C_{YT}.$ Set $g(x):=\frac{x}{\mu_{k+1}-x},$ which is convex in $x\in(-\infty,\mu_{k+1}).$ Hence
\begin{eqnarray}\label{eq:hh1}&&\frac1k\sum_{i=1}^k\frac{\lambda_i-\lambda_{\min}}{\lambda_{k+1}-\lambda_i}=\frac1k\sum_{i=1}^k\frac{\mu_i}{\mu_{k+1}-\mu_i}\nonumber\\&=&\frac 1k\sum_{i}g(\mu_i)\geq g\left(\frac1k\sum_i{\mu_i}\right)=\frac{\frac1k\sum_i\mu_i}{\mu_{k+1}-\frac1k\sum_i\mu_i},\end{eqnarray} where we used Jensen's inequality for $g(x).$ By Corollary~\ref{cor:Yang2}, 
\begin{eqnarray*}\mu_{k+1}&\leq& \frac{\sum_{i=1}^k\mu_i(1+C-\lambda_i)}{\sum_{i=1}^k(1-\lambda_i)}\\&=&\frac{C\sum_{i=1}^k\mu_i}{\sum_{i=1}^k(1-\lambda_i)}+\frac{\sum_{i=1}^k\mu_i(1-\lambda_i)}{\sum_{i=1}^k(1-\lambda_i)}\\&\leq&\frac{C\sum_{i=1}^k\mu_i}{\sum_{i=1}^k(1-\lambda_i)}+ \frac1k\sum_{i=1}^k\mu_i,
\end{eqnarray*} where we used Chebyshev's inequality in the last line.

By plugging it into \eqref{eq:hh1}, we prove the result.
\end{proof}

This result yields a discrete analog of the Paley-Polya-Weinberger inequality.
\begin{cor}\label{cor:PPW11} Suppose that the network $(V,c)$ satisfies the Yang-type inequality \eqref{eq:Yangtype}.
Then for any finite subset $\Omega,$ if $\lambda_{k}\leq 1+C_{YT}$ for some $1\leq k<|\Omega|,$ then 
\begin{equation*}\lambda_{k+1}-\lambda_k\leq C_{YT}\frac{\sum_{i=1}^k (\lambda_i-\lambda_{\min})}{\sum_{i=1}^k(1-\lambda_i)}.\end{equation*}
\end{cor}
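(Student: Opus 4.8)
The plan is to obtain this as an immediate consequence of Corollary~\ref{cor:HP11}, using nothing beyond the ordering of the eigenvalues. First I would dispose of the degenerate cases: exactly as in the proof of Corollary~\ref{cor:HP11} we may assume $\lambda_k<\lambda_{k+1}$, since otherwise $\lambda_{k+1}-\lambda_k\le 0$ whereas the right-hand side is $\ge 0$ (each $\lambda_i-\lambda_{\min}\ge 0$ and $\sum_{i=1}^k(1-\lambda_i)\ge 0$). We may likewise assume $\sum_{i=1}^k(1-\lambda_i)>0$, as otherwise the asserted bound is vacuous.

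The key observation is the crude inequality $\lambda_{k+1}-\lambda_i\ge \lambda_{k+1}-\lambda_k>0$ for every $1\le i\le k$, which holds simply because $\lambda_i\le\lambda_k$. Since $\lambda_i-\lambda_{\min}\ge 0$, dividing and summing over $i$ gives
$$ \sum_{i=1}^k \frac{\lambda_i-\lambda_{\min}}{\lambda_{k+1}-\lambda_i}\;\le\;\frac{1}{\lambda_{k+1}-\lambda_k}\sum_{i=1}^k(\lambda_i-\lambda_{\min}). $$
On the other hand, the hypothesis $\lambda_k\le 1+C_{YT}$ is exactly what is needed to invoke Corollary~\ref{cor:HP11}, which supplies the matching lower bound
$$ \sum_{i=1}^k \frac{\lambda_i-\lambda_{\min}}{\lambda_{k+1}-\lambda_i}\;\ge\;\frac{1}{C_{YT}}\sum_{i=1}^k(1-\lambda_i). $$
Chaining the two displays and clearing denominators (legitimate since $\lambda_{k+1}-\lambda_k>0$ and $\sum_{i=1}^k(1-\lambda_i)>0$) yields $\lambda_{k+1}-\lambda_k\le C_{YT}\,\dfrac{\sum_{i=1}^k(\lambda_i-\lambda_{\min})}{\sum_{i=1}^k(1-\lambda_i)}$, which is the claim.

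I do not expect any genuine obstacle here: all the analytic content is already packaged into Corollary~\ref{cor:HP11}, and the present statement is a one-line rearrangement once one notices the bound $\lambda_{k+1}-\lambda_i\ge\lambda_{k+1}-\lambda_k$. The only points deserving attention are the bookkeeping of the degenerate cases above and checking that the direction of each inequality is preserved when dividing by the positive quantities $\lambda_{k+1}-\lambda_i$.
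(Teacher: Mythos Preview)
Your proposal is correct and follows essentially the same route as the paper: reduce to $\lambda_k<\lambda_{k+1}$, bound $\sum_i\frac{\lambda_i-\lambda_{\min}}{\lambda_{k+1}-\lambda_i}$ above by $\frac{1}{\lambda_{k+1}-\lambda_k}\sum_i(\lambda_i-\lambda_{\min})$ using $\lambda_{k+1}-\lambda_i\ge\lambda_{k+1}-\lambda_k$, and combine with the Hile--Protter bound from Corollary~\ref{cor:HP11}. Your write-up is in fact a bit more careful than the paper's (which has a sign typo in its displayed chain and does not separately treat the case $\sum_{i=1}^k(1-\lambda_i)=0$).
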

\begin{proof} Without loss of generality, we assume that $\lambda_k<\lambda_{k+1}.$ By Corollary~\ref{cor:HP11},
$$\frac{\sum_{i=1}^k(\lambda_i-\lambda_{\min})}{\lambda_{k+1}-\lambda_k}\leq \sum_{i=1}^k\frac{\lambda_i-\lambda_{\min}}{\lambda_{k+1}-\lambda_i}\geq \frac{1}{C_{YT}}\sum_{i=1}^k(1-\lambda_i),$$ which yields the result.
\end{proof}

We remark that for amenable groups, groups with Abelian quotients, and $d$-trees, the discrete analogs of the Paley-Polya-Weinberger inequality and the Hile-Protter inequality, as in Corollary~\ref{cor:PPW11} and Corollary~\ref{cor:HP11} without the assumption that $\lambda_{k}\leq 1+C_{YT}$ for some $1\leq k<|\Omega|,$ can be derived using same arguments in \cite[Theorem~1.1 and Theorem~1.3]{HLS17}.  

We recall a recursion formula proved by Cheng and Yang \cite{CY07}, see also \cite[Theorem~4.2]{HLS17}.
\begin{prop}\label{recursion}
Let $a_1\leq a_2\leq\cdots\leq a_{k+1}$ be any positive numbers and $\theta>0$ such that
\begin{eqnarray}\label{eq:recursion}
\sum_{i=1}^k(a_{k+1}-a_i)^2\leq\theta\sum_{i=1}^ka_i(a_{k+1}-a_i).
\end{eqnarray}
Define
$$F_k=\left(1+\frac{\theta}{2}\right)\left(\frac{1}{k}\sum_{i=1}^ka_i\right)^2-\frac{1}{k}\sum_{i=1}^ka_i^2.$$
Then we have
\begin{eqnarray*}
F_{k+1}\leq \left(\frac{k+1}{k}\right)^{\theta}F_k.
\end{eqnarray*}

\end{prop}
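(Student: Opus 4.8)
The plan is to follow the strategy of Cheng--Yang \cite{CY07}: show that the quantity $F_k$ grows at most like $(k+1/k)^\theta$ by comparing $F_{k+1}$ with $F_k$ directly from the recursion hypothesis \eqref{eq:recursion}. First I would expand $F_{k+1}$ using the definition, splitting off the $(k+1)$st term and writing everything in terms of the partial sums $T_k := \sum_{i=1}^k a_i$ and $Q_k := \sum_{i=1}^k a_i^2$. The recursion hypothesis, rearranged, gives
\begin{equation*}
\sum_{i=1}^k (a_{k+1} - a_i)^2 = k a_{k+1}^2 - 2 a_{k+1} T_k + Q_k \leq \theta\Big( a_{k+1} T_k - Q_k \Big),
\end{equation*}
which I would use as the key inequality controlling $a_{k+1}$ relative to $T_k, Q_k$.

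Next I would compute $F_{k+1}$ explicitly. Writing $T_{k+1} = T_k + a_{k+1}$ and $Q_{k+1} = Q_k + a_{k+1}^2$, one gets
\begin{equation*}
(k+1) F_{k+1} = \Big(1 + \tfrac{\theta}{2}\Big) \frac{(T_k + a_{k+1})^2}{k+1} - Q_k - a_{k+1}^2,
\end{equation*}
and similarly $k F_k = (1 + \tfrac\theta2) \tfrac{T_k^2}{k} - Q_k$. The goal is the inequality $F_{k+1} \le \big(\tfrac{k+1}{k}\big)^\theta F_k$. I would treat this as a statement about the ratio and reduce it, after clearing denominators, to showing that a certain quadratic expression in $a_{k+1}$ (with coefficients depending on $k, \theta, T_k, Q_k$) is nonpositive on the range of $a_{k+1}$ permitted by the recursion hypothesis. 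The standard trick here is to normalize: since both sides scale homogeneously of degree $2$ in the $a_i$, one may rescale so that $\tfrac1k T_k = 1$, and then the recursion hypothesis together with $Q_k \ge \tfrac1k T_k^2 = 1$ (Cauchy--Schwarz) pins down the feasible region for $(a_{k+1}, Q_k)$. On that region the desired inequality becomes a one-variable calculus estimate, which is where the inequality $(1+x)^\theta \ge 1 + \theta x$ (convexity of $t \mapsto t^\theta$ is the wrong direction; rather one uses $(1+1/k)^\theta \ge 1 + \theta/k$, valid since $\theta > 0$) enters to absorb the leading-order term.

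The main obstacle I anticipate is precisely this final one-variable estimate: after normalization one must verify that
\begin{equation*}
\Big(1 + \tfrac\theta2\Big)\frac{(k + a_{k+1})^2}{(k+1)k} - \frac{Q_k + a_{k+1}^2}{k} \ \le\ \Big(\tfrac{k+1}{k}\Big)^\theta\Big( \big(1+\tfrac\theta2\big) - \tfrac{Q_k}{k}\Big),
\end{equation*}
and the cleanest path is to note both sides are linear in $Q_k$ with the $Q_k$-coefficient on the left being $-1/k$ and on the right $-\tfrac1k(\tfrac{k+1}{k})^\theta$, so the worst case is the smallest admissible $Q_k$; substituting the extremal $Q_k$ forced by \eqref{eq:recursion} reduces everything to a polynomial inequality in $a_{k+1}/k =: t$ and $\theta$, which I would verify by checking it holds with equality in an appropriate limiting configuration and has the right monotonicity. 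This bookkeeping is delicate but entirely elementary; no new idea beyond \cite{CY07} is needed, so I would either carry out the algebra carefully or simply cite \cite[Theorem~4.2]{HLS17} and \cite{CY07} for the identical computation.
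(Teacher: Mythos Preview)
The paper does not actually prove this proposition: it is stated with the preamble ``We recall a recursion formula proved by Cheng and Yang \cite{CY07}, see also \cite[Theorem~4.2]{HLS17}'' and no proof is given. Your concluding option, to simply cite \cite{CY07} and \cite[Theorem~4.2]{HLS17}, is therefore exactly what the paper does, and your sketched approach is the Cheng--Yang strategy those references carry out.

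One small caution on the sketch itself: the inequality $(1+1/k)^\theta \ge 1 + \theta/k$ that you invoke is Bernoulli's inequality, which holds for $\theta \ge 1$ but \emph{reverses} for $0 < \theta < 1$, so the parenthetical ``valid since $\theta > 0$'' is not correct as stated. In the actual Cheng--Yang computation the comparison with $(\tfrac{k+1}{k})^\theta$ is handled more carefully (one does not simply linearize the exponent), so if you do decide to write out the algebra rather than cite, be sure to follow their argument at that step rather than the shortcut you indicate.
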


Now we prove an upper bound estimate for $\lambda_k.$
\begin{cor}\label{coro:ratio}Suppose that the network $(V,c)$ satisfies the Yang-type inequality \eqref{eq:Yangtype}.
Then for any finite subset $\Omega,$ if $\lambda_k\leq 1-\delta$ for some $\delta>0,$ then
\begin{equation}\label{eq:est1}\lambda_{k+1}-\lambda_{\min}\leq \left(1+\theta\right)k^{\frac{\theta}{2}}(\lambda_1-\lambda_{\min}),\end{equation} where $\theta=\frac{1}{\delta}C_{YT}.$
\end{cor}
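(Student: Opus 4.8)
The plan is to reduce the Yang-type inequality \eqref{eq:Yangtype} to the hypothesis of the Cheng--Yang recursion (Proposition~\ref{recursion}), iterate it, and then convert the resulting bound on the auxiliary quantity $F_k$ into a bound on $\lambda_{k+1}$. Set $\mu_i=\lambda_i-\lambda_{\min}$ as in \eqref{eq:def:mu} and $\theta=\tfrac1\delta C_{YT}$. First I would observe that for every $j$ with $1\le j\le k$ and every $i\le j$ we have $\lambda_i\le\lambda_j\le\lambda_k\le 1-\delta$, so $1-\lambda_i\ge\delta$; feeding this into the Yang-type inequality \eqref{eq:Yangtype} (with $j$ in place of $k$) and dividing by $\delta$ yields
$$ \sum_{i=1}^{j}(\mu_{j+1}-\mu_i)^2 \;\le\; \theta\sum_{i=1}^{j}\mu_i(\mu_{j+1}-\mu_i), \qquad j=1,\dots,k, $$
since $\lambda_{j+1}-\lambda_i=\mu_{j+1}-\mu_i$ and $\lambda_i-\lambda_{\min}=\mu_i$. (We may assume $\mu_1>0$, as otherwise there is nothing to prove.) Thus $\mu_1\le\cdots\le\mu_{k+1}$ is a positive non-decreasing sequence satisfying the recursion hypothesis \eqref{eq:recursion} with this $\theta$ at every level up to $k$.

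Next I would apply Proposition~\ref{recursion} at levels $j=1,\dots,k-1$. Writing $F_j=(1+\tfrac\theta2)\bigl(\tfrac1j\sum_{i=1}^j\mu_i\bigr)^2-\tfrac1j\sum_{i=1}^j\mu_i^2$ and telescoping the bounds $F_{j+1}\le(\tfrac{j+1}{j})^\theta F_j$, one gets
$$ F_k \;\le\; k^\theta F_1 \;=\; \tfrac\theta2\,k^\theta\,\mu_1^2, $$
using $F_1=(1+\tfrac\theta2)\mu_1^2-\mu_1^2=\tfrac\theta2\mu_1^2$.

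The crux is then to show $F_k\ge\tfrac{\theta}{2(1+\theta)^2}\mu_{k+1}^2$. I would expand the recursion hypothesis \eqref{eq:recursion} at level $k$ to obtain $(1+\theta)\sum_{i=1}^k\mu_i^2\le(2+\theta)\mu_{k+1}\sum_{i=1}^k\mu_i-k\mu_{k+1}^2$, substitute the implied upper bound on $\tfrac1k\sum_{i=1}^k\mu_i^2$ into the definition of $F_k$, and regard what remains as a quadratic in $m:=\tfrac1k\sum_{i=1}^k\mu_i$:
$$ F_k \;\ge\; \tfrac{2+\theta}{2}\,m^2-\tfrac{(2+\theta)\mu_{k+1}}{1+\theta}\,m+\tfrac{\mu_{k+1}^2}{1+\theta}. $$
Its discriminant equals $-\tfrac{\theta(2+\theta)}{(1+\theta)^2}\mu_{k+1}^2<0$, so the minimum over $m\in\R$ is $\tfrac{\theta}{2(1+\theta)^2}\mu_{k+1}^2$, which is the claimed lower bound. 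Combining with the previous display gives $\mu_{k+1}^2\le(1+\theta)^2k^\theta\mu_1^2$, that is $\lambda_{k+1}-\lambda_{\min}\le(1+\theta)k^{\theta/2}(\lambda_1-\lambda_{\min})$, which is \eqref{eq:est1}.

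The hard part will be the lower bound for $F_k$: it is not a priori clear that a quantity assembled from $\lambda_1,\dots,\lambda_k$ alone should control $\lambda_{k+1}$, and the bridge is precisely the Yang-type inequality at level $k$, used one more time and combined with the elementary quadratic estimate above; one must also check that the truncation $\lambda_i\le 1-\delta$ is available at every level needed. I note finally that the cruder one-step consequence $\mu_{j+1}\le(1+\theta)\tfrac1j\sum_{i\le j}\mu_i$ of \eqref{eq:recursion}, telescoped by itself, would only give $\mu_{k+1}\lesssim k^{\theta}\mu_1$; it is the passage through the second-moment quantity $F_j$ in Proposition~\ref{recursion} that sharpens the exponent to the near-Weyl value $\theta/2$.
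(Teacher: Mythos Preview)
Your proposal is correct and follows the same route as the paper's proof: reduce the Yang-type inequality at every level $j\le k$ (using $1-\lambda_i\ge\delta$) to the hypothesis of Proposition~\ref{recursion}, telescope to $F_k\le \tfrac{\theta}{2}k^{\theta}\mu_1^2$, and then use the level-$k$ inequality once more to obtain $F_k\ge \tfrac{\theta}{2(1+\theta)^2}\mu_{k+1}^2$. The only cosmetic difference is that the paper completes the square in $a_{k+1}$ to reach $a_{k+1}^2\le \tfrac{2(1+\theta)^2}{\theta}F_k$, while you obtain the same bound by minimizing the quadratic in $m=\tfrac1k\sum_i\mu_i$; these are the same algebra viewed from two sides, and your explicit check that the hypothesis is available at each level $j\le k$ is in fact a bit clearer than the paper's phrase ``the above result holds for all small $k$''.
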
 
\begin{proof}
Let $\mu_i:=\lambda_i-\lambda_{\min}.$ By the Yang-type inequality \eqref{eq:Yangtype}, we have 
$$\sum_{i=1}^{k} ( \mu_{k+1} - \mu_i )^2 (1- \lambda_i ) 
\leq C\cdot \sum_{i=1}^k (\mu_{k+1} - \mu_i)  \mu_i,$$ where $C=C_{YT}.$ Since $\lambda_k\leq 1-\delta,$ $1-\lambda_i\geq \delta$ for any $1\leq i\leq k.$ This yields that
\begin{equation}\label{eq:d1}\sum_{i=1}^{k} (\mu_{k+1} - \mu_i )^2  
\leq \theta\cdot \sum_{i=1}^k (\mu_{k+1} - \mu_i)  \mu_i,\end{equation} where $\theta=\frac{C}{\delta}.$ By the recursion formula in Proposition~\ref{recursion}, setting $a_i=\mu_i,$ 
$$F_{k+1}\leq \left(\frac{k+1}{k}\right)^{\theta}F_k.$$ Since the above result holds for all small $k,$ we have
$$\frac{F_{k+1}}{(k+1)^\theta}\leq \frac{F_{k}}{k^\theta}\leq \cdots \leq F_1=\frac{\theta}{2}a_1^2.$$
By \eqref{eq:d1},
$$\left(a_{k+1}-(1+\frac{\theta}{2}) A_k\right)^2\leq (1+\frac{\theta}{2})^2A_k^2-(1+\theta)B_k=(1+\theta)F_k-\frac{\theta}{2}(1+\frac{\theta}{2})A_k^2.$$
This yields that $$\frac{\theta}{2(1+\theta)}a_{k+1}^2+\left(a_{k+1}-(1+\theta)A_k\right)^2\leq (1+\theta) F_k.$$ Hence
$$a_{k+1}^2\leq \frac{2(1+\theta)^2}{\theta}F_k\leq (1+\theta)^2k^{\theta}a_1^2.$$ This proves the result.
\end{proof}

\bibliographystyle{alpha}

\end{document}